\DeclareMathOperator{\VHC}{VHC}
\newtheorem{theorem}{Theorem}[section]
\newtheorem{lemma}[theorem]{Lemma}
\newtheorem{proposition}[theorem]{Proposition}
\newtheorem{corollary}[theorem]{Corollary}
\newtheorem{conjecture}[theorem]{Conjecture}
\theoremstyle{definition}
\newtheorem{definition}[theorem]{Definition}
\newtheorem{example}[theorem]{Example}
\def\absdot{\@ifnextchar[{\@absdotlabel}{\@absdotnolabel}}
	\def\@absdotlabel[#1]#2{%
		\node at #2 {\normalsize \mybullet};
		\node at #2 [below=2pt] {\ensuremath{#1}};
	}
	\def\@absdotnolabel#1{%
		\node at #1 {\normalsize \mybullet};
	}
\def\abssquare{\@ifnextchar[{\@abssquarelabel}{\@abssquarenolabel}}
	\def\@abssquarelabel[#1]#2{%
		\node at #2 {\normalsize \mysquare};
		\node at #2 [below=2pt] {\ensuremath{#1}};
	}
	\def\@abssquarenolabel#1{%
		\node at #1 {\normalsize \mysquare};
	}
\def\abstriangle{\@ifnextchar[{\@abstrianglelabel}{\@abstrianglenolabel}}
	\def\@abstrianglelabel[#1]#2{%
		\node at #2 {\normalsize \mytriangle};
		\node at #2 [below=2pt] {\ensuremath{#1}};
	}
	\def\@abstrianglenolabel#1{%
		\node at #1 {\normalsize \mytriangle};
	}
\newcommand{\arc}[2]{
	\draw[thick] (#1,0) arc (180:0:{(#2-#1)/2});
	\node at (#1,0) {\normalsize \ensuremath{\bullet}};
	\node at (#2,0) {\normalsize \ensuremath{\bullet}};
}
\newcommand{\matching}[2][0]{
	\foreach \i/\j in {#2} {
		\arc{\i + #1}{\j + #1};
	};
}
\newcommand\mybullet{\raisebox{-5pt}{\scalebox{1.5}{\normalsize \ensuremath{\bullet}}}}
\newcommand\mycoloredbullet[1]{\raisebox{-5pt}{\scalebox{1.5}{\normalsize \ensuremath{\color{#1} \bullet}}}}
\newcommand{\absdotcolorlabel}[3]{%
	\node at (#1,#2) {\normalsize \mycoloredbullet{#3}};
	\node at (-0.25,#2) {\color{#3} #2};
}
\newcommand\mycoloredstar[1]{\raisebox{-5pt}{\normalsize \ensuremath{\color{#1} \bigstar}}}
\newcommand{\absstarcolorlabel}[3]{%
	\node at (#1,#2) {\normalsize \mycoloredstar{#3}};
	\node at (-0.25,#2) {\color{#3} #2};
}
\newcommand\mysquare{\raisebox{-5pt}{\footnotesize \ensuremath{\blacksquare}}}
\newcommand\mycoloredsquare[1]{\raisebox{-5pt}{\normalsize \ensuremath{\color{#1} \blacksquare}}}
\newcommand{\abssquarecolorlabel}[3]{%
	\node at (#1,#2) {\normalsize \mycoloredsquare{#3}};
	\node at (-0.25,#2) {\color{#3} #2};
}
\newcommand\mytriangle{\scalebox{1.4}{\raisebox{-5pt}{\normalsize \ensuremath{\blacktriangle}}}}
\newcommand\mycoloredtriangle[1]{\scalebox{1.4}{\raisebox{-5pt}{\normalsize \ensuremath{\color{#1} \blacktriangle}}}}
\newcommand{\abstrianglecolorlabel}[3]{%
	\node at (#1,#2) {\normalsize \mycoloredtriangle{#3}};
	\node at (-0.25,#2) {\color{#3} #2};
}
\newcommand{\plotperm}[1]{%
	\foreach \j [count=\i] in {#1} {
		\absdot{(\i,\j)};
	};
}
\newcommand{\hook}[5][black]{%
	\draw [draw=#1, very thick] (#2,#3) -- (#2,#5) -- (#4,#5);
}
\newcommand{\gap}{\kern -0.15em}
\definecolor{lightgray}{rgb}{0.8, 0.8, 0.8}
\definecolor{darkgray}{rgb}{0.7, 0.7, 0.7}
\definecolor{darkgreen}{rgb}{0,0.5,0}
\begin{document}
\title{Stack-Sorting, Set Partitions, and Lassalle's Sequence}

\author{Colin Defant$^1$}
\address{$^1$Princeton University}
\email{cdefant@princeton.edu}

\author{Michael Engen$^2$}
\address{$^2$University of Florida}
\email{engenmt@ufl.edu}

\author{Jordan A. Miller$^3$}
\address{$^3$Washington State University}
\email{jordan.a.miller@wsu.edu}

\begin{abstract}
We exhibit a bijection between recently-introduced combinatorial objects known as valid hook configurations and certain weighted set partitions. When restricting our attention to set partitions that are matchings, we obtain three new combinatorial interpretations of Lassalle's sequence. One of these interpretations involves permutations that have exactly one preimage under the (West) stack-sorting map. We prove that the sequences obtained by counting these permutations according to their first entries are symmetric, and we conjecture that they are log-concave. We also obtain new recurrence relations involving Lassalle's sequence and the sequence that enumerates valid hook configurations. We end with several suggestions for future work.  
\end{abstract}

\maketitle

\bigskip

\section{Introduction}

In 2012, Lassalle \cite{Lassalle} introduced a sequence $(A_m)_{m\geq 1}$ defined by the recurrence relation 
\[
	A_m
	=
	(-1)^{m-1}C_m+\sum_{j=1}^{m-1}(-1)^{j-1}{2m-1\choose 2m-2j-1}A_{m-j}C_j
\]
and subject to the initial condition $A_1=1$. Here, $C_n=\frac{1}{n+1}{2n\choose n}$ is the $n^{\text{th}}$ Catalan number. The first few terms of this sequence, which has now come to be known as \emph{Lassalle's sequence}, are 
\[
	1, 1, 5, 56, 1092, 32670, 1387815, 79389310, 5882844968, 548129834616.
\] 
It is not at all obvious from the definition that the terms of Lassalle's sequence should be positive. Indeed, Lassalle's primary focus was to prove that the terms are positive and increasing, settling a conjecture of Zeilberger. This was reproven in \cite{Amdeberhan}, and the sequence was studied further in \cite{Chen, Josuat, Tevlin, Wang}. In particular, Josuat-Verg\`es found a combinatorial interpretation of $A_m$ in terms of certain weighted matchings; we briefly discuss this in Section 2. In a private communication with Lassalle, Novak pointed out that the numbers $(-1)^{m-1}A_m$ are the classical cumulants of the standard semicircular law. 

One of the primary purposes of this article is to provide three new combinatorial interpretations of the numbers $A_m$. The equivalence of these three interpretations follows from known results, but it is useful to have a variety of perspectives. The first interpretation answers a very natural question concerning the (West) stack-sorting map, whose background we now review. 

Throughout this article, the word \emph{permutation} refers to a permutation of a finite set of positive integers. We write permutations as words. Let $S_n$ denote the set of permutations of $\{1,\ldots,n\}$. A permutation is called \emph{normalized} if it is an element of $S_n$ for some $n$. 

In his seminal monograph \emph{The Art of Computer Programming}, Knuth introduced an algorithm that ``sorts" permutations through the use of a vertical ``stack" \cite{Knuth}. West later studied a slight variant of this algorithm in his 1990 Ph.D. thesis \cite{West}. Specifically, West studied the function $s$, known as the \emph{stack-sorting map}, defined by the following procedure. Suppose we are given an input permutation $\pi=\pi_1\cdots\pi_n$. At any point in time during the procedure, if the next entry in the input permutation is smaller than the entry at the top of the stack or if the stack is empty, the next entry in the input permutation is placed at the top of the stack. Otherwise, the entry at the top of the stack is annexed to the end of the growing output permutation. This algorithm terminates when the output permutation has length $n$, and $s(\pi)$ is defined to be this output permutation. The following figure illustrates this procedure and shows that $s(3142)=1324$.  

\begin{center}
\includegraphics[width=1\linewidth]{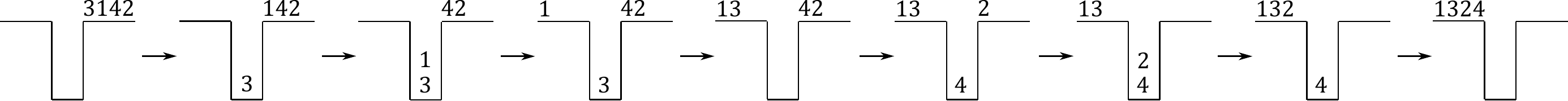}
\end{center}

We will not attempt to discuss all of the literature concerning the stack-sorting map. Instead, we state only some of the background information and refer the interested reader to \cite{Bona, BonaSurvey, Defant2}. 

West called $|s^{-1}(\pi)|$ the \emph{fertility} of the permutation $\pi$ and computed the fertilities of a few very special types of permutations \cite{West}. Bousquet-M\'elou later studied so-called \emph{sorted} permutations, which are permutations that have positive fertilities \cite{Bousquet}. We say a permutation is \emph{uniquely sorted} if its fertility is $1$. A \emph{descent} of a permutation $\pi=\pi_1\cdots\pi_n$ is an index $i\in\{1,\ldots,n-1\}$ such that $\pi_i>\pi_{i+1}$. Suppose $\pi\in S_n$ has exactly $k$ descents. We show in Section 3 that $\pi$ is uniquely sorted if and only if it is sorted and $n=2k+1$. In particular, there are no uniquely sorted permutations of even length.
When considering uniquely sorted permutations of odd length, we are led to our first combinatorial interpretation of Lassalle's sequence. Specifically, we will show that $A_{k+1}$ is precisely the number of uniquely sorted permutations in $S_{2k+1}$. 

One of the central notions concerning the stack-sorting map is that of a \emph{$t$-stack-sortable permutation}. This is simply a permutation $\pi\in S_n$ such that $s^t(\pi)=123\cdots n$, where $s^t$ denotes the $t^\text{th}$ iterate of $s$. Let $W_t(n)$ denote the number of $t$-stack-sortable permutations of length $n$. It follows from Knuth's work in \cite{Knuth} that $W_1(n)=C_n$. West conjectured \cite{West}, and Zeilberger later proved \cite{Zeilberger}, that 
\[
	W_2(n)
	=
	\displaystyle\frac{2}{(n+1)(2n+1)}{3n\choose n}.
\]
It follows from a general result of Backelin, West, and Xin \cite{Backelin} that 
\[
	W_t(n)
	\leq 
	(t+1)^{2n}
\] 
(see \cite[Theorem 3.4]{BonaSurvey}).
For several years, this was the best known upper bound for $W_t(n)$ when $t\geq 3$; it is still the best known upper bound when $t\geq 5$. The first author \cite{Defant2} improved these bounds when $t=3$ and $t=4$, showing that 
\[
	W_3(n)
	<
	n^2\cdot12.53296^n
	\hspace{.5cm}\text{and}\hspace{.5cm}
	W_4(n)
	<
	n^5\cdot21.97225^n.
\]
Recently, he found a polynomial-time algorithm for computing the numbers $W_3(n)$ \cite{DefantCounting}. 
 
In her study of sorted permutations, Bousquet-M\'elou mentioned that it would be interesting to obtain a method for computing the fertility of any given permutation. This was achieved (in greater generality) in \cite{Defant} using new combinatorial objects called ``valid hook configurations." Roughly speaking, a valid hook configuration of a permutation $\pi$ is a configuration of L-shaped ``hooks" that connect points in the plot of $\pi$ subject to certain restrictions. When we speak of a valid hook configuration on $n$ points, we simply mean a valid hook configuration of some permutation of length $n$. The theory of valid hook configurations was the key ingredient used in \cite{Defant2} in order to obtain the above-mentioned upper bounds for $W_3(n)$ and $W_4(n)$. 

We lack a thorough understanding of valid hook configurations; as a consequence, several questions concerning the stack-sorting map remain out of reach. Therefore, one of the other main purposes of this paper is to study these new objects. In Section 3, we review the major definitions and results concerning valid hook configurations. Our presentation differs slightly from that given in \cite{Defant} and \cite{Defant2}. Because those two papers focus on using valid hook configurations to prove other results, they define these structures in fairly technical terms. In contrast, our approach in the current paper is meant to elucidate the constructions and aid comprehension. We also discuss valid hook configurations of uniquely sorted permutations. This allows us to obtain our second interpretation of Lassalle's sequence. Namely, $A_{k+1}$ is the number of normalized valid hook configurations on $2k+1$ points that use exactly $k$ hooks. 

Our final interpretation of Lassalle's sequence, given in Section 4, states that $A_{k+1}$ counts the number of decreasing binary plane trees with some specific properties. The advantage of viewing Lassalle's sequence in terms of trees is that we will be able to easily detect a simple recursive combinatorial construction that builds objects counted by Lassalle's sequence from smaller such objects. 

In order to prove that these objects are counted by Lassalle's sequence, we actually establish a bijection from the set of normalized valid hook configurations on $n$ points to the set $\widetilde{\mathcal P}^c(n+1)$ defined in Section 2. This is done in Section 5. The bijection provides an interesting new way of viewing valid hook configurations. We will show that when $n=2k+1$, the preimage of the set $\widetilde{\mathcal M}^c(2k+2)$ (also defined in Section 2) under this map is the set of normalized valid hook configurations on $2k+1$ points that use exactly $k$ hooks, proving our second new combinatorial interpretation of Lassalle's sequence. The first and third interpretations then follow from the second interpretation and known results. 

We also show that the sequences $(A_{k+1}(\ell))_{\ell=1}^{2k+1}$ are symmetric, where $A_{k+1}(\ell)$ denotes the number of uniquely sorted permutations in $S_{2k+1}$ with first entry $\ell$. This is interesting since we expect the stack-sorting map to output permutations that are in some sense ``close" to the identity permutation. In other words, one should expect permutations with large fertilities to start with small numbers. On the other hand, one should expect permutations with low fertilities to start with large numbers. The symmetry in the sequences $(A_{k+1}(\ell))_{\ell=1}^{2k+1}$ says that a fertility of $1$ is not too big and not too small. This actually makes perfect sense because $1$ is the average fertility of a permutation. To conclude Section 4, we show that $A_{k+1}(\ell)$ also counts uniquely sorted permutations according to another statistic that we call the \emph{eye} of the permutation.  

In Section 5, we exploit the structures of valid hook configurations in order to obtain a recurrence relation that generates the numbers $-k_n(-1)=|\widetilde{\mathcal P}^c(n)|$ (defined in Section 2). It turns out that the same recurrence with different initial conditions generates the Lassalle numbers $A_n$. Finally, we end with several open problems and suggestions for future work.

\section{Lassalle's Sequence and Free Probability}
In this section, we review the combinatorial interpretation of Lassalle's sequence that Josuat-Verg\`es found. Let $\mathcal P(n)$ denote the collection of partitions of the set $\{1,\ldots,n\}$. If $\rho\in\mathcal P(n)$, we say two distinct blocks $B$ and $B'$ of $\rho$ form a \emph{crossing} if there exist $i,k\in B$ and $j,\ell\in B'$ such that either $i<j<k<\ell$ or $i>j>k>\ell$. The \emph{crossing graph} $G(\rho)$ is the graph whose vertices are the blocks of $\rho$ in which two blocks are adjacent if and only if they form a crossing. We say a partition $\rho\in\mathcal P(n)$ is \emph{connected} if $G(\rho)$ is a connected graph. Let $\mathcal P^c(n)$ denote the set of connected partitions in $\mathcal P(n)$. A \emph{matching} is a set partition in which every block has exactly $2$ elements. Let $\mathcal M(n)$ denote the set of matchings in $\mathcal P(n)$, and put $\mathcal M^c(n)=\mathcal M(n)\cap\mathcal P^c(n)$. 

In free probability theory, the free counterpart of the classical Poisson law is known as the \emph{free Poisson law}. It is characterized by the fact that all of the free cumulants are equal to a single parameter $\lambda>0$, known as the \emph{rate}. The free Poisson law also appears in random matrix theory in relation to Wishart matrices \cite{Nica}. The $n^{\text{th}}$ moment is given by 
\[
	m_n(\lambda)
	=
	\sum_{k=1}^n\lambda^kN(n,k),
\]
where $N(n,k)=\frac{1}{n}{n\choose k}{n\choose k-1}$ is a Narayana number. Define the classical cumulants $k_n(\lambda)$ of the free Poisson law by 
\[
	\sum_{n\geq 1}k_n(\lambda)\frac{z^n}{n!}
	=
	\log\left(1+\sum_{n\geq 1}m_n(\lambda)\frac{z^n}{n!}\right).
\]

Let $T_G(x,y)$ denote the Tutte polynomial of a finite simple graph $G$ (see \cite{Josuat} for the definition of the Tutte polynomial of a graph). Josuat-Verg\`es has proven that 
\begin{equation}
\label{Eq1}
	k_n(\lambda)
	=
	-\sum_{\rho\in\mathcal P^c(n)}(-\lambda)^{\#\rho}T_{G(\rho)}(1,0),
\end{equation}
where $\#\rho$ is the number of blocks of $\rho$ (see \cite[Theorem 7.1]{Josuat}). A \emph{source} in a directed graph is a vertex with in-degree $0$. We will make use of the following theorem due to Greene and Zaslavsky. 

\begin{theorem}[\!\!\cite{Greene}]
\label{Thm3}
Fix a vertex $v$ in a simple graph $G$. The number of acyclic orientations of $G$ in which $v$ is the unique source is $T_G(1,0)$.  
\end{theorem}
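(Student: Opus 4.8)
The plan is to prove the statement by induction on the number of edges of $G$, showing that the quantity $a_v(G)$, defined as the number of acyclic orientations of $G$ in which $v$ is the unique source, obeys exactly the deletion--contraction recurrence that defines the evaluation $T_G(1,0)$. I would first record the key reformulation: in an acyclic orientation, $v$ is the unique source if and only if every vertex is reachable from $v$ along a directed path (walking backward along in-edges from any vertex must terminate at a source, and that source can only be $v$). I would also restrict to connected $G$, which is the case needed for the application to crossing graphs $G(\rho)$ of connected partitions; for disconnected $G$ each component contributes its own source, so there are no acyclic orientations with a single source and the statement is read accordingly.

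For the base cases, a single-vertex graph has the empty orientation as its only acyclic orientation, with $v$ as its unique source, so $a_v(G)=1=T_G(1,0)$. I next observe that contracting an edge of a simple graph may create parallel edges but, since there is only one edge between each original pair, no loops; because in any acyclic orientation parallel edges must be oriented alike, the count $a_v$ is insensitive to parallel edges, matching the fact that $T_G(x,0)$ is unchanged by parallel edges, while a loop forces $a_v=0$, matching the factor $y=0$.

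The crux is the recurrence. Fixing an edge $e$ that is neither a bridge nor a loop, I would show
\[
	a_v(G)=a_v(G-e)+a_v(G/e),
\]
mirroring $T_G(1,0)=T_{G-e}(1,0)+T_{G/e}(1,0)$, and for a bridge $e$ I would show $a_v(G)=a_v(G/e)$, mirroring the factor $x=1$. The natural map sends an acyclic orientation of $G$ with unique source $v$ to its restriction to $G-e$; the difficulty is that this restriction need not retain a unique source, because deleting $e$ can strand the head of $e$ as a new source when that vertex had no other in-edge. The heart of the argument is therefore to partition the orientations of $G$ with unique source $v$ according to whether the restriction keeps $v$ as the unique source or instead creates a second source at an endpoint of $e$, and to match these two classes with $a_v(G-e)$ and $a_v(G/e)$ respectively, accounting for the one- versus two-way extendability of each restricted orientation across $e$.

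I expect this source-bookkeeping to be the main obstacle: one must verify that merging the endpoints of $e$ exactly absorbs the orientations where deletion breaks reachability, and separately handle the bridge case, where orienting the bridge is forced and the reachability of the two sides must be transferred correctly to $G/e$. Once the recurrence and base cases are in hand, induction on the number of edges finishes the proof, since $a_v(G)$ and $T_G(1,0)$ satisfy the same recursion with the same initial data. As an alternative that isolates the same difficulty differently, I could instead invoke the connected-graph identity $\chi_G(t)=(-1)^{n-1}t\,T_G(1-t,0)$, reducing the claim to showing that the number of acyclic orientations with unique source $v$ equals the absolute value of the linear coefficient of the chromatic polynomial $\chi_G$; this last identity admits a sign-reversing involution proof on Whitney's subgraph expansion, but its combinatorial content is essentially the deletion--contraction bookkeeping described above.
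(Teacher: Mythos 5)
The paper offers no proof for you to be compared against: Theorem~\ref{Thm3} is imported verbatim from Greene and Zaslavsky \cite{Greene}, whose own argument runs through hyperplane arrangements and Whitney-number interpretations, and the paper only ever applies it to the connected crossing graphs $G(\rho)$, $\rho\in\mathcal P^c(n)$, feeding \eqref{Eq1} and the definition of $\widetilde{\mathcal P}^c(n)$. Your deletion--contraction plan is therefore a genuinely different and more elementary route, and it does work; it is essentially the argument of Gebhard and Sagan for counting acyclic orientations with a fixed unique source. Two remarks. First, your connectedness caveat is mathematically necessary, not cosmetic: for $G$ a disjoint union of two edges, every acyclic orientation has two sources, so your $a_v(G)=0$, while $T_G(1,0)=1$; the paper's phrasing ``simple graph'' survives only because of how the theorem is used. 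Second, the bookkeeping at your crux is slightly finer than ``class $1\leftrightarrow a_v(G-e)$, class $2\leftrightarrow a_v(G/e)$'': writing $b$ for the number of unique-source restrictions to $G-e$ admitting both acyclic extensions across $e$, class $1$ has size $a_v(G-e)+b$ and class $2$ has size $a_v(G/e)-b$. The two facts that make this come out are (i) orienting $e$ into $v$ always creates a cycle (otherwise one obtains an acyclic orientation with no source), so each unique-source restriction contributes one extension, plus a second precisely when it descends to an acyclic orientation of $G/e$ with unique source the image of $v$; and (ii) when the restriction has a second source $w$ at an endpoint of $e$, no directed path enters $w$ at all, so ``$e$ may be oriented into $w$ acyclically,'' ``the restriction descends to $G/e$,'' and ``the contracted orientation has unique source the image of $v$'' all coincide, with each such restriction extending uniquely. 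Summing gives $a_v(G)=a_v(G-e)+a_v(G/e)$; your bridge case $a_v(G)=a_v(G/e)$ and the parallel-edge and loop reductions are correct as stated, and induction on the number of edges closes the proof. What the two routes buy: Greene--Zaslavsky obtain the statement as one instance of a broad geometric dictionary, while your argument is self-contained at the level of graphs and makes the dependence on the Tutte recurrence transparent, which is all this paper needs.
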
 

We can apply this theorem to the crossing graph of a partition $\rho\in\mathcal P^c(n)$ to see that $T_{G(\rho)}(1,0)$ is the number of acyclic orientations of $G(\rho)$ such that the block of $\rho$ containing the element $n$ is the only source. Let $\widetilde{\mathcal P}^c(n)$ be the set of ordered pairs $(\rho,\alpha)$, where $\rho\in\mathcal P^c(n)$ and $\alpha$ is an acyclic orientation of $G(\rho)$ whose only source is the block containing $n$. According to \eqref{Eq1}, we have\footnote{Although the free Poisson law is usually defined for $\lambda>0$, Josuat-Verg\`es' proof does not rely on the positivity of $\lambda$.} 
\begin{equation}\label{Eq2}
	-k_n(-1)
	=
	\left\lvert\widetilde{\mathcal P}^c(n)\right\rvert.
\end{equation} 
By studying the cumulants of the $q$-semicircular law, Josuat-Verg\`es also proved that 
\begin{equation}\label{Eq3}
	A_m
	=
	\left\lvert\widetilde{\mathcal M}^c(2m)\right\rvert,
\end{equation}
where $\widetilde{\mathcal M}^c(2m)$ is the collection of ordered pairs $(\rho,\alpha)\in\widetilde{\mathcal P}^c(2m)$ such that $\rho$ is a matching.

\section{Valid Hook Configurations}

Valid hook configurations were introduced in \cite{Defant, Defant2} as a tool for computing fertilities of permutations. The reader wishing to compare our treatment with that given in \cite{Defant,Defant2} should be aware that the definition given here is, strictly speaking, different from the one given in those two papers. Specifically, the valid hook configurations in those articles were originally defined so that some hooks have horizontal length $1$. We have ignored these ``small'' hooks in our definition since they do not give any additional information relevant for our purposes. The reader who is seeing valid hook configurations here for the first time can ignore these remarks. 

Let us begin the definition by choosing a permutation $\pi=\pi_1\cdots\pi_n$ with descents $d_1<\cdots<d_k$ (we do not require $\pi$ to be normalized). Our running example will be the permutation 
\[ 
	2\,7\,3\,5\,9\,10\,11\,4\,8\,1\,6\,12\,13\,14\,15\,16.
\]
The \emph{plot} of $\pi$ is the graph displaying the points $(i,\pi_i)$ for $1\leq i\leq n$. Figure~\ref{fig-perm-plot} portrays the plot of our example permutation. 
We say a point $(i,\pi_i)$ is a \emph{descent top} if $i$ is a descent. Thus, the descent tops are precisely the points $(d_1,\pi_{d_1}),\ldots,(d_k,\pi_{d_k})$. In our example, the descent tops are $(2,7)$, $(7,11)$, and $(9,8)$. A \emph{hook} of $\pi$ is a sideways L shape that connects a point $(i,\pi_i)$ to a point $(j,\pi_j)$ such that $i<j$ and $\pi_i<\pi_j$. The points $(i,\pi_i)$ and $(j,\pi_j)$ are called the \emph{southwest endpoint} and the \emph{northeast endpoint}, respectively. 

\begin{footnotesize}
\begin{figure}
\begin{tikzpicture}[scale=0.35]
	\draw [lightgray] (0.5,0.5) grid (16.5,16.5);
	\foreach \val [count=\idx] in {2,7,3,5,9,10,11,4,8,1,6,12,13,14,15,16} {
		\absdot{(\idx,\val)}
		\node at (-0.1,\val) {$\val$};
	} 
\end{tikzpicture}
\caption{The plot of the permutation $2\,7\,3\,5\,9\,10\,11\,4\,8\,1\,6\,12\,13\,14\,15\,16$.}
\label{fig-perm-plot}
\end{figure}
\end{footnotesize}

\begin{definition}\label{Def1}
Let $\pi=\pi_1,\ldots,\pi_n$ be a permutation, and let $d_1<\cdots<d_k$ be the descents of $\pi$. A \emph{valid hook configuration} of $\pi$ is a tuple $\mathcal H=(H_1,\ldots,H_k)$ of hooks of $\pi$ subject to the following restrictions: 

\begin{enumerate}[1.]
	\item For every $i\in\{1,\ldots,k\}$, the southwest endpoint of $H_i$ is the descent top $(d_i,\pi_{d_i})$. 
	
	\item A point in the plot cannot lie directly above a hook. 
	
	\item Hooks cannot intersect each other except in the case that the northeast endpoint of one hook is the southwest endpoint of the other. 
\end{enumerate}  
\end{definition}

Figure \ref{fig-forbidden-hooks} shows four placements of hooks that are forbidden by conditions 2 and 3 in Definition~\ref{Def1}.
Figure \ref{fig-valid-hooks} shows a valid hook configuration of our example permutation.
Note that the total number of hooks in a valid hook configuration of $\pi$ is exactly $k$, the number of descents of $\pi$. We say a valid hook configuration is \emph{normalized} if it is a valid hook configuration of a normalized permutation. 

\begin{footnotesize}
\begin{figure}
\begin{tabular}{ccccccc}
	\begin{tikzpicture}[scale=0.35]
	\plotperm{1,3,2}
	\hook{1}{1}{3}{2}
	\end{tikzpicture}
	&\quad&
	\begin{tikzpicture}[scale=0.35]
	\plotperm{2,1,3,4}
	\hook{1}{2}{3}{3}
	\hook{2}{1}{4}{4}
	\end{tikzpicture}
	&\quad&
	\begin{tikzpicture}[scale=0.35]
	\absdot{(1,1)}
	\absdot{(3,2)}
	\absdot{(4,3)}
	\hook{1+0.05}{1}{3}{2}
	\hook{1-0.05}{1}{4}{3}
	\end{tikzpicture}
	&\quad&
	\begin{tikzpicture}[scale=0.35]
	\absdot{(1,2)}
	\absdot{(2,1)}
	\absdot{(3,4)}
	\hook{1}{2}{3}{4+0.05}
	\hook{2}{1}{3}{4-0.05}
	\end{tikzpicture}	
\end{tabular}
\caption{Four placements of hooks that are forbidden in a valid hook configuration.}
\label{fig-forbidden-hooks}
\end{figure}
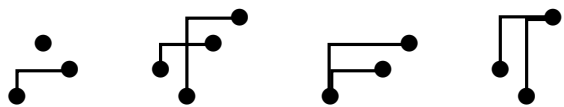
\end{footnotesize} 

\begin{footnotesize}
\begin{figure}
\begin{tikzpicture}[scale=0.35]
	\draw [lightgray] (0.5,0.5) grid (16.5,16.5);
	\foreach \val [count=\idx] in {2,7,3,5,9,10,11,4,8,1,6,12,13,14,15,16} {
		\absdot{(\idx,\val)}
		\node at (-0.1,\val) {$\val$};
	}
	\hook{2}{7}{7}{11}
	\hook{7}{11}{15}{15}
	\hook{9}{8}{13}{13}
\end{tikzpicture}
\caption{A valid hook configuration of $2\,7\,3\,5\,9\,10\,11\,4\,8\,1\,6\,12\,13\,14\,15\,16$.}
\label{fig-valid-hooks}
\end{figure}
\end{footnotesize}

Each valid hook configuration of $\pi$ induces a coloring of the points in the plot of $\pi$. To begin the process of coloring the plot, we first draw a ``sky" over the entire diagram. Of course, we color the sky blue. Next, assign distinct colors other than blue to the $k$ hooks in the valid hook configuration. 

There are $k$ northeast endpoints of hooks, and these points remain uncolored. However, all of the other $n-k$ points will be colored. In order to decide how to color a point $(i,\pi_i)$ that is not a northeast endpoint, imagine that this point simply looks directly upward. If this point sees a hook when looking upward, it receives the same color as the hook that it sees. If the point does not see a hook, it must see the sky, so it receives the color blue. There is one caveat here: if $(i,\pi_i)$ is the southwest endpoint of a hook, then it looks around (on the left side of) the vertical part of that hook. Figure~\ref{fig-colored-hooks} shows the coloring of the plot of our example permutation induced from the valid hook configuration from Figure~\ref{fig-valid-hooks}.
Observe that the point $(2,7)$ is colored blue because this point looks around the first (green) hook and sees the sky. Similarly, $(9,8)$ is red because this point looks around the third (brown) hook and sees the second (red) hook.  
\begin{footnotesize}
\begin{figure}

\begin{tikzpicture}[scale=0.35]
	
	\draw [lightgray] (0.5,0.5) grid (16.5,16.5);
	
	\def\firsthookcolor{blue}
	\def\secondhookcolor{darkgreen}
	\def\thirdhookcolor{red}
	\def\fourthhookcolor{brown}
	
	\hook[\thirdhookcolor]{7}{11}{15}{15}
	\hook[\secondhookcolor]{2}{7}{7}{11}
	\draw[\firsthookcolor,line width=1mm] (0.5,16.7) --(16.5,16.7);
	\hook[\fourthhookcolor]{9}{8}{13}{13}
	
	\absdotcolorlabel{ 1}{ 2}{\firsthookcolor}
	\absdotcolorlabel{ 2}{ 7}{\firsthookcolor}
	
	\absstarcolorlabel{ 3}{ 3}{\secondhookcolor}
	\absstarcolorlabel{ 4}{ 5}{\secondhookcolor}
	\absstarcolorlabel{ 5}{ 9}{\secondhookcolor}
	\absstarcolorlabel{ 6}{10}{\secondhookcolor}
	\absdotcolorlabel{ 7}{11}{black}
	
	\abssquarecolorlabel{ 8}{ 4}{\thirdhookcolor}
	\abssquarecolorlabel{ 9}{ 8}{\thirdhookcolor}
	
	\abstrianglecolorlabel{10}{ 1}{\fourthhookcolor}
	\abstrianglecolorlabel{11}{ 6}{\fourthhookcolor}
	\abstrianglecolorlabel{12}{12}{\fourthhookcolor}
	\absdotcolorlabel{13}{13}{black}
	
	\abssquarecolorlabel{14}{14}{\thirdhookcolor}
	\absdotcolorlabel{15}{15}{black}
	
	\absdotcolorlabel{16}{16}{\firsthookcolor}
%
	
\end{tikzpicture}
\caption{The coloring induced by the valid hook configuration in Figure \ref{fig-valid-hooks}. The colored points are represented with different shapes in order to make the diagram easier to understand in black and white.}
\label{fig-colored-hooks}
\end{figure}
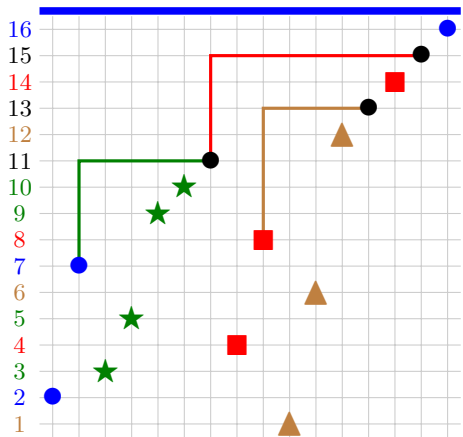
\end{footnotesize}

To summarize, we started with a permutation $\pi$ with exactly $k$ descents. We chose a valid hook configuration of $\pi$ by drawing $k$ hooks according to the rules 1, 2, and 3 in Definition \ref{Def1}. This valid hook configuration then induced a coloring of the plot of $\pi$. Specifically, $n-k$ points were colored, and $k+1$ colors were used (one for each hook and one for the sky). Let $q_i$ be the number of points colored the same color as the $i^\text{th}$ hook $H_i$, and let $q_0$ be the number of points colored blue (sky color). Then $(q_0,q_1,\ldots,q_k)$ is a composition of $n-k$ into $k+1$ parts. We call a composition obtained in this way a \emph{valid composition of }$\pi$. Let $\VHC(\pi)$ be the set of valid hook configurations of $\pi$. Let $\mathcal V(\pi)$ be the set of valid compositions of $\pi$. 

Although we will not use this fact, it is good to be aware of Lemma 3.1 from \cite{Defant2}, which states that the map $\VHC(\pi)\to\mathcal V(\pi)$ obtained by sending a valid hook configuration to its induced valid composition is a bijection. The motivation for studying valid hook configurations comes from the following theorem concerning the fertility of a permutation. 

\begin{theorem}[\!\!\cite{Defant}]
\label{Thm1}
Let $\pi$ be a permutation with exactly $k$ descents. The fertility of $\pi$ is given by the formula 
\[
	|s^{-1}(\pi)|
	=
	\sum_{(q_0,\ldots,q_k)\in\mathcal V(\pi)}\prod_{t=0}^k C_{q_t},
\]
where $C_j=\frac{1}{j+1}{2j\choose j}$ is the $j^\text{th}$ Catalan number. 
\end{theorem}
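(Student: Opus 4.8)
The plan is to prove the formula by strong induction on the length $n$ of $\pi$, using the fundamental recursive description of the stack-sorting map. First I would establish the decomposition lemma: if $\sigma=\sigma_L\,N\,\sigma_R$, where $N$ is the largest entry of $\sigma$, then $s(\sigma)=s(\sigma_L)\,s(\sigma_R)\,N$. The reason is that when the input reaches $N$, every entry currently on the stack is smaller than $N$ and is popped before $N$ is pushed, so the output produced up to that instant is exactly $s(\sigma_L)$; thereafter $N$ rests at the bottom of the stack while $\sigma_R$ is processed on top of it, producing $s(\sigma_R)$, and $N$ is popped last. A consequence is that $s(\sigma)$ always ends in its maximum entry, so $|s^{-1}(\pi)|=0$ unless $\pi$ ends in its maximum. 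When $\pi=\pi'\,N$ does end in its maximum $N$, reading the lemma backwards shows that every preimage arises by choosing a position $0\le p\le n-1$ at which to split $\pi'$ into a prefix $A_p$ and a suffix $B_p$ and then choosing, independently, a preimage of $A_p$ and a preimage of $B_p$ (fertility depends only on the relative order of the entries). This gives the clean preimage recurrence
\[
	|s^{-1}(\pi)|=\sum_{p=0}^{n-1}|s^{-1}(A_p)|\,|s^{-1}(B_p)|.
\]

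Next I would dispose of the degenerate case. If $\pi$ does not end in its maximum, the maximum sits in some position $m<n$; since $\pi_m>\pi_{m+1}$, the index $m$ is a descent and $(m,\pi_m)$ is a descent top. Condition 1 of Definition~\ref{Def1} then forces a hook with southwest endpoint $(m,\pi_m)$, but a hook requires a strictly higher northeast endpoint, and no point lies above the global maximum. Hence $\VHC(\pi)=\varnothing$, so $\mathcal V(\pi)=\varnothing$ and the right-hand side is $0$, matching $|s^{-1}(\pi)|=0$. The base cases $n\le 1$ are immediate: then $\pi$ has no descents, the unique valid composition is $(n)$, and the formula reads $1=C_n$.

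The heart of the argument is to show that the right-hand side
\[
	R(\pi):=\sum_{(q_0,\ldots,q_k)\in\mathcal V(\pi)}\prod_{t=0}^k C_{q_t}
\]
obeys the \emph{same} recurrence $R(\pi)=\sum_{p=0}^{n-1}R(A_p)R(B_p)$ whenever $\pi=\pi'\,N$ ends in its maximum; the theorem then closes by induction. The cleanest framing I see is to read $R(\pi)$ as counting \emph{decorated} valid hook configurations: a valid hook configuration together with a choice, in each of the $k+1$ monochromatic regions, of one of the $C_{q_t}$ stack-sortable permutations of the appropriate length. One then peels off the top-right point $(n,N)$ and cuts the decorated configuration at an abstract position $p$: structure lying entirely to the left of the cut should descend to $A_p$ and structure lying entirely to the right to $B_p$, while the region that the point $(n,N)$ inhabits is divided between the two factors. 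Summing over where the cut falls inside that region should reproduce exactly the Catalan convolution $C_q=\sum_{a+b=q-1}C_aC_b$. This mechanism is already transparent in the increasing case $k=0$, where there are no hooks, the lone (sky) region of size $n$ simply splits, and the recurrence degenerates to $C_n=\sum_{p=0}^{n-1}C_pC_{n-1-p}$.

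I expect the main obstacle to be making this splitting operation precise and reversible in the presence of hooks. The delicate point is that hooks need not lie wholly to one side of the cut: a hook can straddle position $p$, and one must account correctly both for such severed hooks and for the descent that is destroyed when the cut lands at a descent of $\pi'$. I would therefore need a structural lemma guaranteeing that the restriction of a decorated valid hook configuration to the columns on each side of the cut yields genuine valid hook configurations of $A_p$ and $B_p$ — so that conditions 2 and 3 of Definition~\ref{Def1} are inherited — together with its inverse, showing that any pair of decorated configurations of $A_p$ and $B_p$ glues back to a unique decorated configuration of $\pi$ with the prescribed split. Verifying this hook bookkeeping, and confirming that the Catalan convolution is precisely what governs the divided region, is the technical core; once it is in hand the induction concludes at once. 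If naive column-restriction proves too lossy because of straddling hooks, the fallback is to replace the arbitrary cut by a decomposition tied to the hook whose northeast endpoint is $(n,N)$, trading the global split position for a choice internal to a single region so that no hook is ever severed.
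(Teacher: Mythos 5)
Your groundwork is solid and correct: the identity $s(\sigma_L\,N\,\sigma_R)=s(\sigma_L)\,s(\sigma_R)\,N$, the resulting recurrence $|s^{-1}(\pi'N)|=\sum_{p}|s^{-1}(A_p)||s^{-1}(B_p)|$, the vanishing of both sides when $\pi$ does not end in its maximum (your observation that the descent top at the global maximum can anchor no hook, so $\VHC(\pi)=\varnothing$, is exactly right), and the base cases. Note that the paper itself does not prove Theorem~\ref{Thm1}; it imports it from \cite{Defant}, where the argument runs through the identity $s=P\circ I^{-1}$: preimages of $\pi$ are precisely the decreasing binary plane trees with postorder $\pi$, these are classified by their induced valid hook configuration, and each color class of size $q_t$ --- an increasing run by Lemma~\ref{Lem1} --- contributes $C_{q_t}$ admissible tree structures. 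So your inductive route is genuinely different from the source proof, and if completed it would yield a self-contained alternative. But it is not completed.

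The gap is the splitting lemma $R(\pi)=\sum_p R(A_p)R(B_p)$, which you announce as ``the technical core'' but do not prove --- and this lemma is essentially the whole theorem in disguise, since everything else in your write-up is standard. Moreover, the mechanism you propose (a free cut position $p$, with a Catalan convolution in ``the region that $(n,N)$ inhabits'') is not correct as stated, because two structurally different cases occur. If $(n,N)$ is the \emph{northeast endpoint} of a hook whose southwest endpoint sits in column $p_0$, then the cut is forced, $p=p_0$ (in the tree picture, the root's left child is the root of the left subtree, read last in $P(T_L)$): no color class is divided at all, the factors $C_{q_t}$ transfer unchanged, the top hook's class becoming the sky class of $B_{p_0}$, and one must check via conditions 2 and 3 of Definition~\ref{Def1} that no other hook can straddle column $p_0$ (a straddling horizontal part either crosses the top hook's vertical segment or places $(p_0,\pi_{p_0})$ directly above a hook). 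If instead $(n,N)$ is sky-colored, it is counted in $q_0$, only the sky class is divided, and the convolution $C_{q_0}=\sum_{a+b=q_0-1}C_aC_b$ applies --- but then you must prove that the admissible cuts are exactly the columns immediately preceding sky points, one for each $a$; this again rests on condition 2 (no hook's horizontal part may pass beneath a sky point), together with the subtlety that a hook whose southwest endpoint is itself sky-colored blocks the cut immediately after that point, those ``missing'' cuts being supplied by the first case with the hook redirected to $(n,N)$. Finally, your fallback --- tying the decomposition to ``the hook whose northeast endpoint is $(n,N)$'' --- silently assumes such a hook exists, which fails for every configuration in which $(n,N)$ sees the sky (e.g., the hookless configuration of the identity). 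Until this two-case bijection on decorated configurations is stated and verified in both directions, the proposal is a plausible program rather than a proof.
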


Using valid hook configurations, one can also count preimages of a permutation $\pi$ under the map $s$ according to certain statistics. For example, Corollary 5.1 in \cite{Defant} provides a formula for the number of preimages of $\pi$ with a given number of valleys. Theorem 5.2 in the same paper gives a formula for the number of preimages with a prescribed number of descents. 

One immediate application of Theorem \ref{Thm1} comes from Exercise 18 in Chapter 8 of B\'ona's \emph{Combinatorics of Permutations} \cite{Bona}, which asks for the maximum number of descents that a sorted permutation of length $n$ can have. Recall that a permutation is called \emph{sorted} if its fertility is positive. Suppose $\pi=\pi_1\cdots\pi_n$ is a sorted permutation with $k$ descents. It follows from Theorem~\ref{Thm1} that $\mathcal V(\pi)$ is nonempty. Since the elements of $\mathcal V(\pi)$ are compositions of $n-k$ into $k+1$ parts, we must have $k+1\leq n-k$. Thus, $k\leq \left\lfloor\frac{n-1}{2}\right\rfloor$. Using valid hook configurations, it is not difficult to construct sorted permutations of length $n$ with $\left\lfloor\frac{n-1}{2}\right\rfloor$ descents. If $n=2k+1$, then Theorem \ref{Thm1} actually tells us that every sorted permutation of length $n$ with $k$ descents has fertility $1$ (i.e., it is uniquely sorted). Indeed, the only valid composition of such a permutation is $(1,1,\ldots,1)$, so the fertility is $\prod_{t=0}^k C_1=1$. 

On the other hand, suppose $\pi$ is a uniquely sorted permutation of length $n$ with $k$ descents. According to Theorem \ref{Thm1}, we must have $\mathcal V(\pi)=\{(1,1,\ldots,1)\}$, where $(1,1,\ldots,1)$ is a composition of $n-k$ into $k+1$ parts that are all equal to $1$. This proves the following proposition. 

\begin{proposition}\label{Prop1}
Let $\pi$ be a permutation of length $n$ with $k$ descents. The permutation $\pi$ is uniquely sorted if and only if it is sorted and $n=2k+1$.
\end{proposition}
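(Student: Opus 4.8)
The plan is to reduce everything to Theorem~\ref{Thm1} together with two elementary facts about the quantities appearing in it. First, a valid composition of $\pi$ is a composition of $n-k$ into $k+1$ (positive) parts, so each $q_t\geq 1$ and $\sum_{t=0}^k q_t=n-k$. Second, the Catalan numbers satisfy $C_j\geq 1$ for all $j\geq 0$, with $C_0=C_1=1$ and $C_j\geq 2$ for $j\geq 2$; in particular $C_j=1$ if and only if $j\in\{0,1\}$. Because every valid composition has positive parts, $C_{q_t}=1$ forces $q_t=1$. With these in hand I would prove the two implications separately.

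For the forward implication, suppose $\pi$ is uniquely sorted, so $|s^{-1}(\pi)|=1$. This fertility is positive, so $\pi$ is sorted by definition. Now apply Theorem~\ref{Thm1}: the fertility is the sum over $\mathcal V(\pi)$ of the positive integers $\prod_{t=0}^k C_{q_t}\geq 1$. A sum of integers each at least $1$ equals $1$ only if there is exactly one summand and that summand equals $1$; hence $\mathcal V(\pi)$ is a singleton $\{(q_0,\ldots,q_k)\}$ with $\prod_{t=0}^k C_{q_t}=1$. A product of Catalan numbers equals $1$ precisely when every factor does, i.e. $q_t\in\{0,1\}$ for all $t$, and positivity of the parts upgrades this to $q_t=1$ for all $t$. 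Summing, $n-k=\sum_{t=0}^k q_t=k+1$, which gives $n=2k+1$, as required.

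For the reverse implication, suppose $\pi$ is sorted and $n=2k+1$. Since $\pi$ is sorted, its fertility is positive, so the sum in Theorem~\ref{Thm1} is nonempty and $\mathcal V(\pi)\neq\varnothing$. Every element of $\mathcal V(\pi)$ is a composition of $n-k=k+1$ into $k+1$ positive parts, and the only such composition is $(1,1,\ldots,1)$. Therefore $\mathcal V(\pi)=\{(1,1,\ldots,1)\}$, and Theorem~\ref{Thm1} yields $|s^{-1}(\pi)|=\prod_{t=0}^k C_1=1$. Thus $\pi$ is uniquely sorted.

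The argument is short once Theorem~\ref{Thm1} is available, and the only genuinely load-bearing observation is that the parts of a valid composition are strictly positive: this is what pins the unique composition of $k+1$ into $k+1$ parts down to the all-ones tuple and rules out extra summands. Without positivity the reverse implication would fail, since there are many weak compositions of $k+1$ into $k+1$ parts. The complementary bound $k\leq\lfloor (n-1)/2\rfloor$ for sorted permutations, noted before the statement, falls out of the same positivity observation, so no additional machinery is needed.
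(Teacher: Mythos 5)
Your proof is correct and takes essentially the same route as the paper: both directions reduce to Theorem~\ref{Thm1} together with the observation that a valid composition of $\pi$ is a composition of $n-k$ into $k+1$ \emph{positive} parts, which forces $\mathcal V(\pi)=\{(1,1,\ldots,1)\}$ and hence fertility $\prod_{t=0}^k C_1=1$ in one direction and $n-k=k+1$ in the other. The paper gives exactly this argument in the two paragraphs preceding the proposition; your only addition is to spell out explicitly that $C_j=1$ if and only if $j\in\{0,1\}$, which the paper leaves implicit.
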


As mentioned in the introduction, this proves that there are no uniquely sorted permutations of even length. 

\begin{corollary}\label{Cor1}
Uniquely sorted permutations in $S_{2k+1}$ are in bijection with normalized valid hook configurations on $2k+1$ points with $k$ hooks.  
\end{corollary}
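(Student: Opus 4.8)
The plan is to exhibit the bijection explicitly and verify that it is well defined in both directions. Write $\mathcal U_{2k+1}$ for the set of uniquely sorted permutations in $S_{2k+1}$ and $\mathcal C_{2k+1}$ for the set of normalized valid hook configurations on $2k+1$ points that use exactly $k$ hooks. The natural candidate map $\Psi\colon \mathcal C_{2k+1}\to\mathcal U_{2k+1}$ sends a valid hook configuration to the permutation of which it is a configuration; this is well defined as a set map because a valid hook configuration carries the data of its underlying permutation. The entire content of the corollary is then that $\Psi$ is a bijection.

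First I would check that $\Psi$ actually lands in $\mathcal U_{2k+1}$. Suppose $\mathcal H\in\mathcal C_{2k+1}$ has underlying permutation $\pi\in S_{2k+1}$. Because a valid hook configuration of a permutation with $j$ descents uses exactly $j$ hooks, the hypothesis that $\mathcal H$ uses $k$ hooks forces $\pi$ to have exactly $k$ descents. Moreover $\VHC(\pi)\neq\emptyset$, so $\mathcal V(\pi)\neq\emptyset$, and Theorem~\ref{Thm1} then shows that the fertility of $\pi$ is a nonempty sum of products of Catalan numbers, hence positive; that is, $\pi$ is sorted. Since $\pi\in S_{2k+1}$ is sorted with $k$ descents, Proposition~\ref{Prop1} gives that $\pi$ is uniquely sorted, so $\pi\in\mathcal U_{2k+1}$.

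Next I would construct the inverse. Given $\pi\in\mathcal U_{2k+1}$, Proposition~\ref{Prop1} tells us $\pi$ is sorted with exactly $k$ descents, so in particular $\mathcal V(\pi)\neq\emptyset$. Since the fertility of $\pi$ equals $1$ and every Catalan number is at least $1$ (with $C_j=1$ exactly when $j\in\{0,1\}$), Theorem~\ref{Thm1} forces $\mathcal V(\pi)$ to consist of a single composition in which every part equals $1$; indeed, the parts are nonnegative, number $k+1$, and sum to $n-k=k+1$, so $\mathcal V(\pi)=\{(1,\ldots,1)\}$. Invoking the bijection $\VHC(\pi)\to\mathcal V(\pi)$ recorded just before Theorem~\ref{Thm1} (Lemma 3.1 of \cite{Defant2}), we conclude that $\pi$ has exactly one valid hook configuration; it uses $k$ hooks and its underlying permutation is normalized, so it lies in $\mathcal C_{2k+1}$. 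Sending $\pi$ to this unique configuration defines a map $\mathcal U_{2k+1}\to\mathcal C_{2k+1}$, and it is immediate from the definitions that this map and $\Psi$ are mutually inverse.

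I do not expect a serious obstacle here: the corollary is essentially a repackaging of Proposition~\ref{Prop1}, Theorem~\ref{Thm1}, and the correspondence $\VHC(\pi)\cong\mathcal V(\pi)$. The one point that deserves a moment of care is the uniqueness half—that a uniquely sorted permutation admits a single valid hook configuration—which rests on reading off from Theorem~\ref{Thm1} that a fertility of $1$ permits only the all-ones valid composition. Everything else is bookkeeping: matching the number of hooks to the number of descents, and matching sortedness to the nonemptiness of $\mathcal V(\pi)$.
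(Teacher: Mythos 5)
Your proposal is correct and takes essentially the same approach as the paper, whose two-sentence proof likewise sends a configuration to its underlying permutation, uses Proposition~\ref{Prop1} for the direction you handle first, and gets uniqueness of the configuration from the fact that Theorem~\ref{Thm1} forces $\mathcal V(\pi)=\{(1,\ldots,1)\}$ when the fertility is $1$. If anything, you are slightly more careful than the paper: you explicitly invoke the bijection $\VHC(\pi)\to\mathcal V(\pi)$ (Lemma~3.1 of \cite{Defant2}) to pass from the unique valid composition to a unique valid hook configuration, a step the paper's proof uses tacitly despite its earlier remark that this lemma would not be needed.
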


\begin{proof}
A uniquely sorted permutation in $S_{2k+1}$ has a unique valid hook configuration, which must have $k$ hooks. On the other hand, if we are given a normalized valid hook configuration on $2k+1$ points with $k$ hooks, then the underlying permutation must be a sorted permutation in $S_{2k+1}$ with $k$ descents. By Proposition \ref{Prop1}, this permutation is uniquely sorted. 
\end{proof}

Corollary \ref{Cor1} establishes the equivalence of our first two combinatorial interpretations of Lassalle's sequence. We can describe uniquely sorted permutations (or equivalently, their valid hook configurations) via the following recursive combinatorial construction. 

Begin by choosing two uniquely sorted permutations $\tau$ and $\mu$ such that $\tau\mu\in S_{2k}$ for some $k$. Make sure that the largest entry of $\tau$ is greater than the first entry of $\mu$. Now form the permutation $\tau\mu(2k+1)$. It might be easier to visualize this procedure by picturing valid hook configurations. Figure \ref{fig-hook-decomp} shows a uniquely sorted permutation with its valid hook configuration; the two permutations from which this larger permutation was formed are shaded separately. 
In general, if we are given a uniquely sorted permutation $\pi=\pi_1\cdots\pi_{2k+1}\in S_{2k+1}$, it is easy to reobtain the two uniquely sorted permutations from which it was built. We first draw the unique valid hook configuration of the given permutation. The point $(2k+1,2k+1)$ must be a point in the plot, and it must be a northeast endpoint of a hook. The southwest endpoint of that hook is of the form $(d_r,\pi_{d_r})$ for some descent $d_r$. The two permutations from which $\pi$ was built are $\tau=\pi_1\cdots\pi_{d_r}$ and $\mu=\pi_{d_r+1}\cdots\pi_{2k}$.

\begin{footnotesize}
\begin{figure}[h]

\begin{tikzpicture}[scale=0.35]

\draw[opacity=0.5, fill=lightgray, draw=darkgray, thick] (0.5,3.5) rectangle (3.4,7.5);
\draw[opacity=0.5, fill=lightgray, draw=darkgray, thick] (3.6,0.5) rectangle (8.5,8.5);

\draw [lightgray] (0.5,0.5) grid (9.5,9.5);

\foreach \val [count=\idx] in {5,4,7,6,2,1,3,8,9} {
	\absdot{(\idx,\val)}
	\node at (-0.1,\val) {$\val$};
}
\hook{1}{5}{3}{7}
\hook{3}{7}{9}{9}
\hook{4}{6}{8}{8}
\hook{5}{2}{7}{3}
\end{tikzpicture}

\caption{The uniquely sorted permutation $547621389$ is built from the smaller uniquely sorted permutations $547$ and $62138$.}
\label{fig-hook-decomp}
\end{figure}
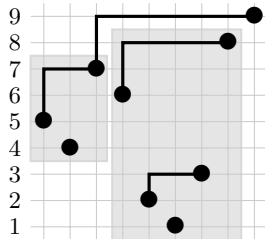
\end{footnotesize}

This recursive construction suggests a link with trees, which leads us to our third combinatorial interpretation of Lassalle's sequence. 

\section{Decreasing Plane Trees}

A decreasing plane tree is a rooted plane tree whose nodes are labeled with distinct positive integers such that every non-root node has a label that is smaller than the label of its parent. A rooted plane tree is called \emph{binary} if each vertex has at most two children. If a vertex has exactly one child, we distinguish between whether this child is a left or right child. A labeled tree is called \emph{normalized} if its set of labels is of the form $\{1,\ldots,n\}$ for some $n$. See Figure~\ref{fig-decreasing-binary-tree} for an example of a decreasing binary plane tree. 

To read a decreasing binary plane tree in \emph{in-order} (sometimes called \emph{symmetric order}), we first read the left subtree of the root, then the root, and finally the right subtree of the root. Each subtree is itself read in in-order. The in-order reading of the tree in Figure~\ref{fig-decreasing-binary-tree} is $2635741$. Let $I(T)$ denote the in-order reading of the decreasing binary plane tree $T$. The map $I$ is a bijection from the set of normalized decreasing binary plane trees on $n$ vertices to the set $S_n$ \cite{Bona, Stanley}. 

\begin{footnotesize}
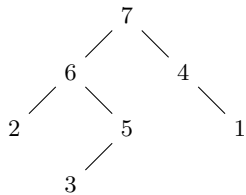
\begin{figure}[h]
\begin{tikzpicture}[scale=0.75]

\node (7) at ( 0, 0) {$7$};
\node (4) at ( 1,-1) {$4$};
\node (1) at ( 2,-2) {$1$};
\node (6) at (-1,-1) {$6$};
\node (5) at ( 0,-2) {$5$};
\node (2) at (-2,-2) {$2$};
\node (3) at (-1,-3) {$3$};

\draw (1) -- (4) -- (7) -- (6) -- (5) -- (3);
\draw (6) -- (2);

\end{tikzpicture}
\caption{A (normalized) decreasing binary plane tree.}
\label{fig-decreasing-binary-tree}
\end{figure}
\end{footnotesize}

To read a decreasing binary plane tree in \emph{postorder}, we first read the left subtree of the root, then the right subtree of the root, and finally the root. Each subtree is itself read in postorder. The postorder reading of the tree in Figure~\ref{fig-decreasing-binary-tree} is $2356147$. Let $P(T)$ denote the postorder reading of a decreasing binary plane tree $T$. It turns out that the stack-sorting map can be described in terms of in-order and postorder readings \cite{Bona}. Specifically, \[s=P\circ I^{-1}.\] For example, $s(2635741)=2356147=P(I^{-1}(2635741))$, where $I^{-1}(2635741)$ is the tree in Figure~\ref{fig-decreasing-binary-tree}. 

\begin{footnotesize}
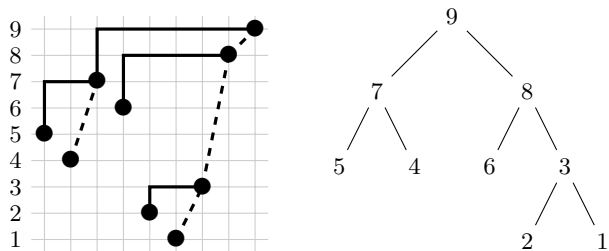
\begin{figure}
\begin{tabular}{ccc}
\begin{tikzpicture}[scale=0.35]
	\draw [lightgray] (0.5,0.5) grid (9.5,9.5);

	\foreach \val [count=\idx] in {5,4,7,6,2,1,3,8,9} {
		\absdot{(\idx,\val)}
		\node at (-0.1,\val) {$\val$};
	}
	
	\draw [very thick, dashed] (2,4) -- (3,7);
	\draw [very thick, dashed] (6,1) -- (7,3);
	\draw [very thick, dashed] (7,3) -- (8,8);
	\draw [very thick, dashed] (8,8) -- (9,9);
	
	\hook{1}{5}{3}{7}
	\hook{3}{7}{9}{9}
	\hook{4}{6}{8}{8}
	\hook{5}{2}{7}{3}
\end{tikzpicture}
&\quad
\begin{tikzpicture}

	\node (9) at ( 0, 0) {$9$};
	\node (7) at (-1,-1) {$7$};
	\node (5) at (-1.5,-2) {$5$};
	\node (4) at (-0.5,-2) {$4$};
	\node (8) at ( 1,-1) {$8$};
	\node (6) at (0.5,-2) {$6$};
	\node (3) at (1.5,-2) {$3$};
	\node (2) at ( 1,-3) {$2$};
	\node (1) at ( 2,-3) {$1$};
	
	\draw (5) -- (7) -- (4);
	\draw (7) -- (9) -- (8) -- (3) -- (2);
	\draw (8) -- (6);
	\draw (3) -- (1);
\end{tikzpicture}
\end{tabular}
\caption{The left image shows a valid hook configuration of $\pi=547621389$ along with some additional dotted lines. The hooks and dotted lines transform into the edges of the tree on the left, which is the unique decreasing binary plane tree with postorder $\pi$.}
\label{fig-construction}
\end{figure}
\end{footnotesize}
The first author has shown \cite{Defant} how to construct the decreasing binary plane trees whose postorders are equal to a given permutation $\pi$. We will not review this construction here. Instead, we simply discuss the mechanics of this construction in the (much simpler) specific case in which $\pi$ is uniquely sorted. Refer to Figure~\ref{fig-construction} for an illustration. 

Suppose $\pi=\pi_1\cdots\pi_{2k+1}$ is uniquely sorted, and draw its valid hook configuration. For each northeast endpoint $(j,\pi_j)$, consider the point $(j-1,\pi_{j-1})$. Draw a dotted line between these two points. Now replace each point $(i,\pi_i)$ with a vertex with label $\pi_i$. ``Unbend" each hook to transform it into a left edge. Similarly, transform each dotted line into a right edge. This produces the unique decreasing binary plane tree $T$ with postorder $\pi$. Note also that $I(T)$ is the unique permutation in $s^{-1}(\pi)$. 

For convenience, we say a decreasing binary plane tree $T$ is \emph{lonely} if no other decreasing binary plane tree has the same postorder as $T$. The normalized lonely trees on $n$ vertices are precisely the decreasing binary plane trees whose postorders are uniquely sorted permutations in $S_n$. This provides our third combinatorial interpretation of Lassalle's sequence. More precisely, there are no normalized lonely trees with an even number of vertices, while there are precisely $A_{k+1}$ normalized lonely trees on $2k+1$ vertices. 

It is possible to describe lonely trees without referring to permutations or the stack-sorting map. The description is recursive and is essentially equivalent to the recursive construction of uniquely sorted permutations discussed at the end of the previous section. The proof that the construction has the desired properties amounts to combining the recursive construction of uniquely sorted permutations with the above bijection between uniquely sorted permutations and normalized lonely trees.      

Given a decreasing binary plane tree $T$, we call the vertex that is read first in $I(T)$ the \emph{leftmost vertex} of $T$. Suppose $a$ is a vertex in $T$ with two children. Let $b$ be the left child of $a$. By the \emph{leftmost cousin} of $b$, we mean the leftmost vertex in the right subtree of $a$. This is also the vertex that is read immediately after $a$ in $I(T)$. The leftmost vertex of the tree in Figure~\ref{fig-decreasing-binary-tree} has label $2$. In that tree, the leftmost cousin of the vertex labeled $2$ is the vertex labeled $3$. Bousquet-M\'elou \cite{Bousquet} defined a decreasing binary plane tree to be \emph{canonical} if every vertex that has a left child also has a right child and every left child has a label that is larger than the label of its leftmost cousin. We say a decreasing binary plane tree is \emph{full} if every vertex has either $0$ or $2$ children. 

Our alternative descriptions of lonely trees are as follows. Of course, a single vertex with a positive integer label is a lonely tree. A lonely tree with more than one vertex is a decreasing binary plane tree that consists of a root whose left and right subtrees are themselves lonely and that has the additional property that the left child of the root has a label that is larger than the label of its leftmost cousin. Alternatively, a lonely tree is simply a decreasing binary plane tree that is full and canonical. 

\section{The Main Bijection}

Now that we have described our three combinatorial interpretations of Lassalle's sequence and shown that they are in bijection with each other, we can move on to actually proving that these objects are counted by Lassalle's sequence. This will follow as a consequence of the following more general theorem. First, we need a short lemma and some observations about valid hook configurations. 

\begin{lemma}\label{Lem1}
Let $\mathcal H$ be a valid hook configuration of a permutation $\pi$. Consider the coloring of the plot of $\pi$ induced by $\mathcal H$. If $i_1<\cdots<i_r$ are indices such that $(i_1,\pi_{i_1}),\ldots,(i_r,\pi_{i_r})$ are all given the same color, then $\pi_{i_1}<\cdots<\pi_{i_r}$.  
\end{lemma}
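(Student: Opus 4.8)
The plan is to argue by contradiction and to reduce immediately to the two-point case. The conclusion $\pi_{i_1}<\cdots<\pi_{i_r}$ for a monochromatic sequence of increasing index is equivalent to the assertion that among any two points of the same color, the one with the smaller index has the smaller height. So I will suppose there are two points $(i,\pi_i)$ and $(j,\pi_j)$ of the same color $c$ with $i<j$ but $\pi_i>\pi_j$, and derive a contradiction. The engine is to locate a hook that ``blocks'' the view between them: let $m$ be the index in $\{i,\ldots,j\}$ maximizing $\pi_m$. Since $\pi_i>\pi_j$ the maximum is not attained at $j$, so $m<j$, and then $\pi_m>\pi_{m+1}$ forces $m$ to be a descent. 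Hence $(m,\pi_m)$ is a descent top and carries a hook $H'$ of $\mathcal H$, say with northeast endpoint $(e,\pi_e)$, whose horizontal part sits at height $\pi_e$ over the columns $[m,e]$. I then split according to whether $c$ is the color of the sky or of a hook.

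If $c$ is blue, I consider the subcases $j\le e$ and $j>e$. When $j\le e$, the northeast endpoint $(e,\pi_e)$ is uncolored and $j$ is blue, so $j<e$; thus $(j,\pi_j)$ lies strictly below the horizontal part of $H'$, and by condition~2 of Definition~\ref{Def1} it has $\pi_j<\pi_e$, so looking upward it meets a hook rather than the sky, contradicting that it is blue. (The southwest-endpoint caveat in the coloring rule only shifts the line of sight infinitesimally to the left, which still meets $H'$ because $m<j$.) When $j>e$, the northeast endpoint satisfies $i<e<j$ yet $\pi_e>\pi_m=\max_{[i,j]}\pi$, which is absurd.

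If $c$ is the color of a hook $H_c$, whose horizontal part lies at height $h$ over columns $[a,b]$, I first record that $a<i<j<b$. Indeed, a point of color $c$ must see $H_c$ when it looks essentially straight up, so it lies under the horizontal part of $H_c$; moreover the southwest endpoint $(a,\pi_a)$ of $H_c$ looks around to the left of its own vertical part and therefore never sees $H_c$, so neither $i$ nor $j$ can equal $a$, while $j=b$ is the uncolored northeast endpoint. Consequently $m\ge i>a$, giving $m\in(a,b)$, so condition~2 yields $\pi_m<h$ and $H'\neq H_c$. Next I bound $\pi_e$: the vertical part of $H'$ rises at column $m\in(a,b)$ from $\pi_m<h$, so if $\pi_e>h$ it crosses the horizontal part of $H_c$ at the interior point $(m,h)$, and if $\pi_e=h$ then $H'$ and $H_c$ share a northeast endpoint; both intersections are forbidden by condition~3, so $\pi_e<h$. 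The same two subcases now finish the argument: if $j\le e$ then $j<e$ and $(j,\pi_j)$ lies below $H'$ at height $\pi_e<h$, so it sees a hook at height below $h$ and cannot have color $c$; and if $j>e$ then $e\in(i,j)$ with $\pi_e>\pi_m=\max_{[i,j]}\pi$, again absurd.

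I expect the delicate points to be precisely the two places where the southwest-endpoint caveat and condition~3 intervene: verifying that the leftmost monochromatic point is never the southwest endpoint of the hook coloring it, so that $i>a$ and hence $H'\neq H_c$, and the crossing argument that pins down $\pi_e<h$. Everything else is routine bookkeeping with the maximum $m$ and with condition~2.
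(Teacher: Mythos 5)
Your proof is correct and follows essentially the same route as the paper's: both derive a contradiction from a monochromatic inversion $\pi_i>\pi_j$ (with $i<j$) by producing a descent top between the two points whose hook blocks the right-hand point's upward view. The differences are minor---the paper chooses the rightmost descent $d\in[i_1,i_2)$ with $\pi_d>\pi_{i_2}$ where you choose the position of the maximum entry on $[i,j]$ (which makes the case $e<j$ immediately absurd), and your blue-versus-hook-color case analysis, using conditions 2 and 3 of Definition~\ref{Def1}, carefully spells out the color-separation step that the paper asserts in a single sentence.
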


\begin{proof}
It suffices to prove the lemma in the case $r=2$. Assume instead that $\pi_{i_1}>\pi_{i_2}$. There must be a descent $d$ of $\pi$ such that $i_1\leq d<i_2$ and $\pi_d>\pi_{i_2}$. Assume that $d$ is chosen maximally subject to these conditions. There must be a hook whose southwest endpoint is $(d,\pi_d)$. The point $(i_2,\pi_{i_2})$ lies below this hook while $(i_1,\pi_{i_1})$ does not. This means that $(i_1,\pi_{i_1})$ and $(i_2,\pi_{i_2})$ cannot have the same color, which contradicts our hypothesis. 
\end{proof}

Suppose $\mathcal H$ is a valid hook configuration of a permutation $\pi$. There is a canonical decomposition of $\mathcal H$ that makes use of what we call the \emph{top hook}. This is simply the hook whose northeast endpoint is farthest to the north. For example, the top hook in Figure~\ref{fig-valid-hooks} is the hook with southwest endpoint $(7,11)$ and northeast endpoint $(15,15)$. The top hook separates $\mathcal H$ into two smaller valid hook configurations. We call these the sheltered and unsheltered pieces of $\mathcal H$. Specifically, the sheltered piece consists of all of the points and hooks that lie strictly underneath the top hook. The unsheltered piece consists of all of the other points and hooks except for the northeast endpoint of the top hook and the top hook itself. Let $\mathcal H_S$ denote the set of entries $\pi_i$ such that $(i,\pi_i)$ is in the sheltered piece. Define $\mathcal H_U$ similarly for the unsheltered piece. In the example depicted in Figure~\ref{fig-valid-hooks}, we have 
\[
	\mathcal H_S
	=
	\{1,4,6,8,12,13,14\}
	\quad\text{and}\quad
	\mathcal H_U
	=
	\{2,3,5,7,9,10,11,16\}.
\] This decomposition of valid hook configurations will be crucial in our proof of Theorem \ref{Thm2} below.

We are now ready to define our main bijection. Recall from Section 2 that $\widetilde{\mathcal P}^c(n)$ is the set of ordered pairs $(\rho,\alpha)$, where $\rho$ is a set partition of $\{1,\ldots,n\}$ whose crossing graph $G(\rho)$ is connected and $\alpha$ is an acyclic orientation of $G(\rho)$ whose only source is the block containing $0$. Let 
\[
	\VHC(S_{n-1})
	=
	\bigcup_{\pi\in S_{n-1}}\VHC(\pi)
\]
denote the set of all normalized valid hook configurations on $n-1$ points. 

We define a map \[\Phi:\VHC(S_{n-1})\to \widetilde{\mathcal P}^c(n)\] as follows. Let $\mathcal H$ be a valid hook configuration of a permutation $\pi=\pi_1\cdots\pi_{n-1}\in S_{n-1}$. Suppose $\mathcal H$ has $k$ hooks (equivalently, $\pi$ has $k$ descents). As discussed in Section 3, $\mathcal H$ induces a coloring of the plot of $\pi$. The $k$ northeast endpoints of hooks in $\mathcal H$ remain uncolored in this coloring. However, for the purpose of this proof, let us actually color the northeast endpoints as well. We do this by giving the northeast endpoint of a hook the same color as that hook. We now obtain a coloring of the elements of $\{1,\ldots,n-1\}$ by giving $\pi_i$ the same color as the point $(i,\pi_i)$ for each $i$. For example, $\pi_1$ must be blue (sky-colored) because the point $(1,\pi_1)$ must be blue. Let us also color the number $n$ blue. This yields a partition $\rho$ of $\{1,\ldots,n\}$ into color classes. For each block $B$ of this partition, let $\widehat B=\{i:\pi_i\in B, 1\leq i\leq n-1\}$.

We need to choose an acyclic orientation $\alpha$ of $G(\rho)$. To do this, suppose we have an edge of $G(\rho)$ with endpoints $B$ and $B'$. In other words, $B$ and $B'$ are blocks of $\rho$ that form a crossing. If $\min \widehat B<\min \widehat{B'}$, orient this edge from $B$ to $B'$. If $\min \widehat{B'}<\min \widehat B$, orient the edge from $B'$ to $B$. This defines the acyclic orientation $\alpha$, so put $\Phi(\mathcal H)=(\rho,\alpha)$. 

\begin{example}\label{Exam2}
Let $n=17$, and let $\mathcal H$ be the valid hook configuration in Figure~\ref{fig-valid-hooks}. To obtain the pair $\Phi(\mathcal H)=(\rho,\alpha)$, begin by coloring the diagram as in Figure~\ref{fig-colored-hooks}. For each hook $H$, color the northeast endpoint of $H$ the same color as $H$. This yields the diagram shown on the left in Figure~\ref{fig-complete-map}. The blocks in $\rho$ are the heights of the points in each color class, where we put the number $n=17$ in the blue block. Specifically, the blocks of $\rho$ are \[B_{\text{blue}}=\{2,7,16,17\},\quad B_{\text{green}}=\{3,5,9,10,11\},\quad B_{\text{red}}=\{4,8,14,15\},\quad B_{\text{brown}}=\{1,6,12,13\}.\] We also have \[\widehat{B}_{\text{blue}}=\{1,2,16\},\quad \widehat{B}_{\text{green}}=\{3,4,5,6,7\},\quad \widehat{B}_{\text{red}}=\{8,9,14,15\},\quad \widehat{B}_{\text{brown}}=\{10,11,12,13\}.\] 

In this example, every pair of blocks in $\rho$ forms a crossing, so $G(\rho)$ is a complete graph on $4$ vertices. In the acyclic orientation $\alpha$, depicted in the right image of Figure \ref{fig-complete-map}, we orient the edge connecting $B_{\text{blue}}$ and $B_{\text{green}}$ away from $B_{\text{blue}}$ since $\min \widehat{B}_{\text{blue}}=1<3=\min \widehat{B}_{\text{green}}$. We orient an edge from $B_{\text{red}}$ to $B_{\text{brown}}$ since $\min \widehat{B}_{\text{red}}=8<10=\min \widehat{B}_{\text{brown}}$. The other edges are oriented similarly.  
\end{example}

\begin{footnotesize}
\begin{figure}
\[
\begin{array}{ccc}
%
	\begin{tikzpicture}[scale=0.35]
	
		\draw [lightgray] (0.5,0.5) grid (16.5,16.5);
		
		\def\firsthookcolor{blue}
		\def\secondhookcolor{darkgreen}
		\def\thirdhookcolor{red}
		\def\fourthhookcolor{brown}
		
		\hook[\thirdhookcolor]{7}{11}{15}{15}
		\hook[\secondhookcolor]{2}{7}{7}{11}
		\draw[\firsthookcolor,line width=1mm] (0.5,16.7) --(16.5,16.7);
		\hook[\fourthhookcolor]{9}{8}{13}{13}
		
		\absdotcolorlabel{ 1}{ 2}{\firsthookcolor}
		\absdotcolorlabel{ 2}{ 7}{\firsthookcolor}
		
		\absstarcolorlabel{ 3}{ 3}{\secondhookcolor}
		\absstarcolorlabel{ 4}{ 5}{\secondhookcolor}
		\absstarcolorlabel{ 5}{ 9}{\secondhookcolor}
		\absstarcolorlabel{ 6}{10}{\secondhookcolor}
		\absstarcolorlabel{ 7}{11}{\secondhookcolor}
		
		\abssquarecolorlabel{ 8}{ 4}{\thirdhookcolor}
		\abssquarecolorlabel{ 9}{ 8}{\thirdhookcolor}
		
		\abstrianglecolorlabel{10}{ 1}{\fourthhookcolor}
		\abstrianglecolorlabel{11}{ 6}{\fourthhookcolor}
		\abstrianglecolorlabel{12}{12}{\fourthhookcolor}
		\abstrianglecolorlabel{13}{13}{\fourthhookcolor}
		
		\abssquarecolorlabel{14}{14}{\thirdhookcolor}
		\abssquarecolorlabel{15}{15}{\thirdhookcolor}
		
		\absdotcolorlabel{16}{16}{\firsthookcolor}
%
	
	\end{tikzpicture}
	&\quad\quad\quad&
	\begin{tikzpicture}[scale=2]
	
		\def\distance{0.15}
		\def\longdistance{\distance * 0.8}
	
		\def\firsthookcolor{blue}
		\def\secondhookcolor{darkgreen}
		\def\thirdhookcolor{red}
		\def\fourthhookcolor{brown}
		
		\draw [ultra thick, ->] (0,1-\distance) to (0,0+\distance);
		\draw [ultra thick, ->] (1,1-\distance) to (1,0+\distance);
		
		\draw [ultra thick, ->] (0+\longdistance, 1-\longdistance) to (1-\longdistance,0+\longdistance);
		\draw [ultra thick, ->] (1-\longdistance,1-\longdistance) to (0+\longdistance, 0+\longdistance);
		
		\draw [ultra thick, ->] (0+\distance,0) to (1-\distance,0);
		\draw [ultra thick, ->] (0+\distance,1) to (1-\distance,1);
		
		\node at (-.1,1.21) {\color{\firsthookcolor}$\{2,7,16,17\}$};
		\node at (1.1,1.21) {\color{\secondhookcolor}$\{3,5,9,10,11\}$};
		\node at (-.1,-.22) {\color{\thirdhookcolor}$\{4,8,14,15\}$};
		\node at (1.1,-.22) {\color{\fourthhookcolor}$\{1,6,12,13\}$};
		\node at (0,1) {\scalebox{1.5}{\mycoloredbullet{\firsthookcolor}}};
		\node at (1,1) {\scalebox{1.5}{\mycoloredstar{\secondhookcolor}}};
		\node at (0,0) {\scalebox{1.5}{\mycoloredsquare{\thirdhookcolor}}};
		\node at (1,0) {\scalebox{1.5}{\mycoloredtriangle{\fourthhookcolor}}};
		
		\node at (0,-1) {};
	\end{tikzpicture}

\end{array}
\]
\caption{An illustration of Example \ref{Exam2}. The map $\Phi$ sends the valid hook configuration on the left to the set partition and acyclic orientation illustrated on the right. } 
\label{fig-complete-map}
\end{figure}
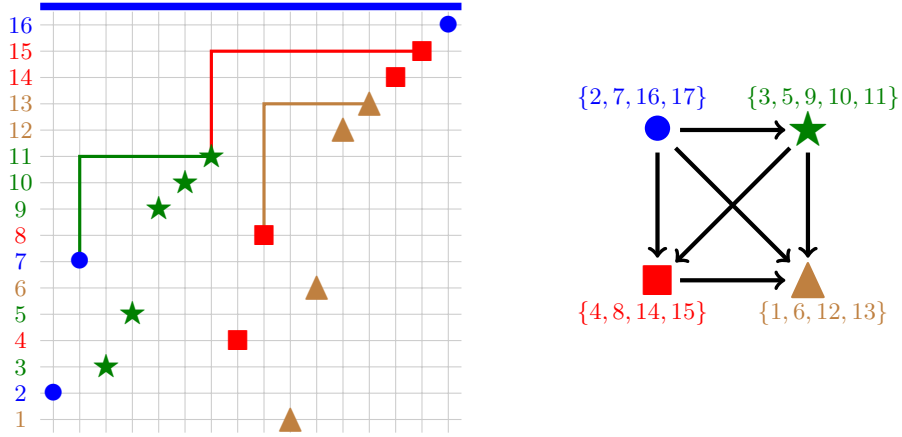
\end{footnotesize}

\begin{theorem}\label{Thm2}
The map $\Phi:\VHC(S_{n-1})\to \widetilde{\mathcal P}^c(n)$ defined above is a bijection. 
\end{theorem}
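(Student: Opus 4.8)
The plan is to prove that $\Phi$ is a bijection by first checking that it is well-defined (i.e.\ that $\Phi(\mathcal H)$ really lies in $\widetilde{\mathcal P}^c(n)$) and then producing a two-sided inverse by induction on $n$, using the \emph{top hook} decomposition introduced just above. The first thing I would record is the structural consequence of Lemma~\ref{Lem1}: inside each color class the points form an increasing chain, so for every block $B$ the $j$-th smallest element of $\widehat B$ is the position of the $j$-th smallest element of $B$. In particular the northeast endpoint of a hook $H_i$ is the \emph{maximum} element of its color block $B_i$, located at the largest position in $\widehat B_i$. With this in hand, two of the three defining conditions of $\widetilde{\mathcal P}^c(n)$ are immediate: $\alpha$ is acyclic because it is induced by the linear order $B\mapsto\min\widehat B$ on blocks, and since $(1,\pi_1)$ can never lie beneath a hook it is always blue, so $\min\widehat B_{\mathrm{blue}}=1$ is globally smallest and the blue block (which contains $n$) is a source.

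The substantive part of well-definedness is that $G(\rho)$ is connected with the blue block as its \emph{only} source. Here I would use the hook geometry: for a non-blue block $B_i$, every color-$i$ point lies strictly to the right of the southwest endpoint $(d_i,\pi_{d_i})$ of $H_i$, so $\min\widehat B_i>d_i$, whereas the color $c$ of that southwest endpoint satisfies $\min\widehat B_c\le d_i$. One then shows that $B_i$ crosses a block $B'$ with $\min\widehat{B'}<\min\widehat B_i$ (for instance $B_c$, whose element $\pi_{d_i}$ interleaves in value with the color-$i$ points beneath and at the arm of $H_i$). This simultaneously shows $B_i$ is not a source and connects it, along an edge of $G(\rho)$, to a block appearing earlier in the $\min\widehat B$ order; iterating yields a path down to the blue block, giving both connectivity and uniqueness of the source.

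For the inverse I would argue inductively; the base case of no hooks, where $\rho$ is a single block, is immediate. The key observation (again from the order-matching) is that the \emph{top-hook block} $B_\star$ is exactly the non-blue block with the largest maximum element, which can be read off from $\rho$ alone, and that $M=\max B_\star$ is the value of the northeast endpoint of the top hook, sitting at the last position of $\widehat B_\star$. Deleting $M$ converts $B_\star\setminus\{M\}$ into the \emph{sky} block of the sheltered piece, while the blue block together with the blocks lying outside the top hook assemble the unsheltered piece. I would show that the sheltered and unsheltered data are themselves elements of $\widetilde{\mathcal P}^c$ on strictly smaller ground sets, apply the inductive hypothesis to reconstruct the two smaller valid hook configurations, and then reglue them by reinserting the top hook with northeast endpoint $M$ and southwest endpoint the descent top prescribed by the way the two pieces fit together. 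Verifying that $\Phi$ of the reassembled configuration returns $(\rho,\alpha)$, and that every step of the reconstruction is forced, gives surjectivity and injectivity at once.

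I expect the principal obstacle to be the bookkeeping of crossings across this decomposition. The crossing graph does \emph{not} split as a disjoint union of the two pieces: a block sheltered under the top hook can still cross an unsheltered block, because a crossing is a condition on \emph{values} rather than positions, and the two pieces interleave in value even when their position ranges are disjoint (this already happens in Example~\ref{Exam2}, where $G(\rho)$ is complete). Consequently the induction must do more than restrict $\rho$ and $\alpha$ to the two sub-ground-sets. The crux is to prove that (a) the sheltered/unsheltered splitting of $\{1,\ldots,n\}$ is determined by $(\rho,\alpha)$ — in particular that the southwest endpoint $d_\star$ is recoverable — and that (b) each crossing of $G(\rho)$, together with its orientation, is faithfully recreated when the two smaller crossing graphs are glued through the reinstated top hook, with no crossings spuriously added or lost and with the per-piece source conditions matching that of the whole. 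Establishing this compatibility of crossings under gluing is the step on which the entire argument turns.
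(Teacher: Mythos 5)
Your architecture is the same as the paper's (well-definedness first, then a forced reconstruction via the top-hook decomposition), but there are two genuine gaps. First, in the unique-source argument your parenthetical suggestion that the crossing partner of a non-blue block $B_i$ can be taken to be the block $B_c$ of the color of the southwest endpoint of $H_i$ is false in general: $B_c$ may nest strictly inside the value-span of $B_i$ without crossing it, since non-crossing only forces all of $B_c$ into a single gap between consecutive elements of $B_i$, and this is perfectly consistent with $\min\widehat{B}_c<\min\widehat{B}_i$. The paper's proof has to work harder: it iterates the southwest-endpoint map to build a chain of colors $c_1,c_2,\ldots,c_m$ ending at blue, and argues by contradiction that if none of $B_{c_2},\ldots,B_{c_m}$ crossed $B_{c_1}$, then the nesting inequalities $\min B_{c_{i-1}}<\pi_{u_i}<\max B_{c_{i-1}}$ would propagate down the entire chain, forcing $\min B_{c_1}<\pi_{u_m}<\max B_{c_1}$; since the blue block $B_{c_m}$ contains both $\pi_{u_m}$ and $n>\max B_{c_1}$, it crosses $B_{c_1}$ after all. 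So the crossing partner you need may only appear at the very end of the chain, and your one-step shortcut is a step that would fail, even though your target statement (a crossing edge to a block with strictly smaller $\min\widehat{B}$, iterated down to blue) is exactly right.

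Second, in the inverse direction you correctly recover the northeast endpoint of the top hook (your $M$ is the paper's $a$, the largest non-blue element, with $\pi_j=j$ forced for $j\in\{a,\ldots,n-1\}$), but you then declare the recovery of the sheltered/unsheltered split and of the southwest endpoint to be ``the principal obstacle'' without supplying the idea that resolves it --- and that idea is the actual content of the paper's proof. The acyclic orientation $\alpha$ induces a partial order $\preceq$ on the blocks by reachability, and if $A$ is the block containing $a$, then $\mathcal H_S$ is precisely the union of all blocks $D$ with $A\preceq D$; consequently $\mathcal H_U=\{1,\ldots,n-1\}\setminus(\mathcal H_S\cup\{a\})$, and the southwest endpoint of the top hook is pinned down explicitly as $(b,c)$ with $b=|\mathcal H_U|-(n-a)$ and $c=\max\left(\mathcal H_U\setminus\{a+1,\ldots,n-1\}\right)$. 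With every step thus forced by $(\rho,\alpha)$, the induction closes without the elaborate ``crossing-compatibility under gluing'' analysis you anticipate; your worry that the crossing graph does not split along the two pieces is legitimate (Example \ref{Exam2} indeed has a complete crossing graph), but the paper's formulation avoids confronting it head-on by proving uniqueness of the reconstruction rather than re-deriving the crossing data of the pieces. As written, your proposal identifies where the difficulty lives but leaves both the chain argument and the order-ideal characterization --- the two load-bearing ideas --- unproved.
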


\begin{proof}
We first check that the pair $(\rho,\alpha)$ is in fact an element of $\widetilde{\mathcal P}^c(n)$. It is clear that the orientation $\alpha$ is acyclic. Let $B_{\text{blue}}$ denote the blue block of $\rho$. We know that $n\in B_{\text{blue}}$ and $\min\widehat{B}_{\text{blue}}=1$. This means that the block containing $n$ is a source; we must show that it is the only source. This will also imply that $G(\rho)$ is connected since each connected component contains a source for the acyclic orientation (this is a standard fact about acyclic orientations of graphs). 

For each color $c$ that we use, let $B_c$ be the block of $\rho$ with the color $c$. If $c$ is not blue, let $H_c$ be the hook with the color $c$. Choose a non-blue color $c_1$. Our goal is to find a block $B^*$ of $\rho$ such that $B_{c_1}$ and $B^*$ form a crossing and $\min \widehat{B}^*<\min \widehat{B}_{c_1}$. Let $(u_2,\pi_{u_2})$ be the southwest endpoint of $H_{c_1}$, and let $c_2$ be the color of $(u_2,\pi_{u_2})$. If $c_2$ is not blue, then let $(u_3,\pi_{u_3})$ be the southwest endpoint of $H_{c_2}$, and let $c_3$ be the color of $(u_3,\pi_{u_3})$. Continue in this fashion until eventually defining a point $(u_m,\pi_{u_m})$ whose color $c_m$ \emph{is} blue. It follows from the properties of valid hook configurations that $\min\widehat{B}_{c_m}<\min\widehat{B}_{c_{m-1}}<\cdots<\min\widehat{B}_{c_1}$. Therefore, it suffices to show that there is some $i\in\{2,\ldots,m\}$ such that $B_{c_1}$ and $B_{c_i}$ form a crossing (we can then put $B^*=B_{c_i}$). 

For $i\in\{2,\ldots,m\}$, the point $(u_i+1,\pi_{u_i+1})$ is the color $c_{i-1}$. This means that $\pi_{u_i+1}\in B_{c_{i-1}}$, so $\min B_{c_{i-1}}\leq \pi_{u_i+1}$. Furthermore, $\max B_{c_{i-1}}$ is the height of the northeast endpoint of $H_{c_{i-1}}$. Because $(u_i,\pi_{u_i})$ is the southwest endpoint of $H_{c_{i-1}}$, we have $\pi_{u_i}<\max B_{c_{i-1}}$. We also know that $(u_i,\pi_{u_i})$ is a descent top of $\pi$, so $\pi_{u_i+1}<\pi_{u_i}$. Combining these inequalities yields $\min B_{c_{i-1}}<\pi_{u_i}<\max B_{c_{i-1}}$. This is important because $\pi_{u_i}\in B_{c_i}$. Suppose by way of contradiction that none of the blocks $B_{c_2},\ldots,B_{c_m}$ form a crossing with $B_{c_1}$. Because $B_{c_2}$ does not form a crossing with $B_{c_1}$, we must have $\min B_{c_1}<\min B_{c_2}<\pi_{u_3}<\max B_{c_2}<\max B_{c_1}$. Because $B_{c_3}$ does not form a crossing with $B_{c_1}$, we must have $\min B_{c_1}<\min B_{c_3}<\pi_{u_4}<\max B_{c_3}<\max B_{c_1}$. Continuing in this manner, we eventually find that $\min B_{c_1}<\pi_{u_m}<\max B_{c_1}$. However, $B_{c_m}$ is the blue block, so $\pi_{u_m}$ and $n$ are in $B_{c_m}$. We have $\min B_{c_1}<\pi_{u_m}<\max B_{c_1}<n$, which means that $B_{c_m}$ does form a crossing with $B_{c_1}$ after all, a contradiction. 

It remains to show that $\Phi$ is a bijection. To do so, we exhibit its inverse. Suppose we are given a pair $(\rho,\alpha)\in\widetilde{\mathcal P}^c(n)$. We want to reobtain the valid hook configuration $\mathcal H$ with $\Phi(\mathcal H)=(\rho,\alpha)$. We can assume that $\rho$ has more than one block; otherwise, $\mathcal H$ is the valid hook configuration of the identity permutation that has no hooks. Here is where we make use of the ``top hook decomposition" discussed before the definition of the map $\Phi$. If we can determine the southwest and northeast endpoints of the top hook of $\mathcal H$ along with the sets $\mathcal H_S$ and $\mathcal H_U$, then we can proceed inductively to reconstruct all of $\mathcal H$. We will see that these endpoints and sets are completely determined by $(\rho,\alpha)$, from which it will follow that there is a unique $\mathcal H\in\VHC(S_{n-1})$ with $\Phi(\mathcal H)=(\rho,\alpha)$. 

Begin by coloring the elements of $\{1,\ldots,n\}$ so that two elements have the same color if and only if they are in the same block of $\rho$. Make sure to use the color blue to color the elements of the block containing $n$. Let $a$ be the largest element of $\{1,\ldots,n\}$ that is not blue. Because $a+1,\ldots,n-1$ are all blue, we need the points with these heights to see the sky when they look up. This forces us to put $\pi_j=j$ for all $j\in\{a,\ldots,n-1\}$ (otherwise, there would be a hook preventing one of these points from seeing the sky). The northeast endpoint of the top hook of $\mathcal H$ must be the highest point that is not blue in the coloring induced by $\mathcal H$. Our choice of $a$ and the definition of $\Phi$ guarantee that this point has height $a$. Therefore, the northeast endpoint of the top hook of $\mathcal H$ must be $(a,a)$. 

Now, the acyclic orientation $\alpha$ defines a partial order $\preceq$ on the blocks of $\rho$, where we declare that $B\preceq B'$ if and only if there is a directed path from $B$ to $B'$ in $G(\rho)$ or $B=B'$. Let $A$ be the block of $\rho$ containing $a$. One can show that $\mathcal H_S$ must be the union of all of the blocks $D$ satisfying $A\preceq D$. We then know that $\mathcal H_U$ must be $\{1,\ldots,n-1\}\setminus(\mathcal H_S\cup\{a\})$. Observe that the numbers $a+1,\ldots,n-1$ are elements of $\mathcal H_U$; the next-largest entry of $\mathcal H_U$ must be the height of the southwest endpoint of the top hook of $\mathcal H$. More precisely, this southwest endpoint is $(b,c)$, where $b=|\mathcal H_U|-(n-a)$ and $c=\max(\mathcal H_U\setminus \{a+1,\ldots,n-1\})$.  
\end{proof}

Using the notation of Section 2, we now deduce from \eqref{Eq2} that $-k_n(-1)$ is the total number of normalized valid hook configurations on $n-1$ points. In fact, we have the following more general consequence of \eqref{Eq1} and the preceding theorem. 

\begin{corollary}
The $n^\text{th}$ classical cumulant of the free Poisson law with rate $\lambda$ is given by \[k_n(\lambda)=-\sum_{\mathcal H\in\VHC(S_{n-1})}(-\lambda)^{\#\mathcal H+1},\] where $\#\mathcal H$ denotes the number of hooks in $\mathcal H$. 
\end{corollary}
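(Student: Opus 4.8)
The plan is to chain together the three ingredients already assembled in the paper: the Josuat-Verg\`es formula \eqref{Eq1}, the Greene--Zaslavsky theorem (Theorem \ref{Thm3}), and the bijection $\Phi$ (Theorem \ref{Thm2}). First I would rewrite \eqref{Eq1} as a sum over the set $\widetilde{\mathcal P}^c(n)$ rather than over connected partitions. Applying Theorem \ref{Thm3} to the crossing graph $G(\rho)$ of each $\rho\in\mathcal P^c(n)$, the factor $T_{G(\rho)}(1,0)$ equals the number of acyclic orientations $\alpha$ of $G(\rho)$ whose unique source is the block containing $n$; these are precisely the $\alpha$ for which $(\rho,\alpha)\in\widetilde{\mathcal P}^c(n)$. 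Absorbing this count into the summation index turns \eqref{Eq1} into
\[
	k_n(\lambda)=-\sum_{(\rho,\alpha)\in\widetilde{\mathcal P}^c(n)}(-\lambda)^{\#\rho}.
\]

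Next I would reindex this sum using the bijection $\Phi:\VHC(S_{n-1})\to\widetilde{\mathcal P}^c(n)$ from Theorem \ref{Thm2}. Since $\Phi$ is a bijection, summing over $(\rho,\alpha)\in\widetilde{\mathcal P}^c(n)$ is the same as summing over $\mathcal H\in\VHC(S_{n-1})$, where $(\rho,\alpha)=\Phi(\mathcal H)$. It then remains only to express the exponent $\#\rho$ in terms of the data of $\mathcal H$.

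The one point requiring a short argument is the identity $\#\rho=\#\mathcal H+1$. This follows from the very construction of $\Phi$: the coloring induced by a valid hook configuration with $k$ hooks uses exactly $k+1$ colors (one for each hook and one for the sky), and in the augmented coloring used to define $\Phi$ every color class is nonempty---the blue class contains $n$, and the class of each hook contains at least that hook's northeast endpoint. Hence the partition $\rho$ has exactly $k+1=\#\mathcal H+1$ blocks (the degenerate case of the identity permutation, where $\#\mathcal H=0$ and $\rho$ is a single block, is consistent with this). Substituting $(-\lambda)^{\#\rho}=(-\lambda)^{\#\mathcal H+1}$ into the displayed sum yields the claimed formula. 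I expect the main---and only mildly delicate---obstacle to be this bookkeeping step verifying that no color class is empty, so that the number of blocks of $\rho$ genuinely equals the number of hooks plus one; everything else is a formal substitution.
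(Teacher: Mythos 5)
Your proposal is correct and follows exactly the route the paper intends: it deduces the formula from \eqref{Eq1}, Theorem \ref{Thm3} (to replace $T_{G(\rho)}(1,0)$ by a sum over acyclic orientations, giving a sum over $\widetilde{\mathcal P}^c(n)$), and the bijection $\Phi$ of Theorem \ref{Thm2}, with the bookkeeping identity $\#\rho=\#\mathcal H+1$ that the paper itself confirms in the proof of Corollary \ref{Cor2} (where $\rho$ is noted to have $h+1$ blocks of sizes $q_0+1,\ldots,q_h+1$). Your extra care in checking that every color class is nonempty, including the degenerate identity-permutation case, is a valid and complete filling-in of the step the paper leaves implicit.
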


In the previous two sections, we found bijective correspondences among uniquely sorted permutations in $S_{2k+1}$, normalized valid hook configurations on $2k+1$ points with $k$ hooks, and normalized lonely trees on $2k+1$ vertices. We can now finally show that these objects are counted by Lassalle's sequence. Let $\VHC^h(S_{n-1})$ be the set of normalized valid hook configurations on $n-1$ points with $h$ hooks.

\begin{corollary}\label{Cor2}
When $n=2k+2$, the map $\Phi$ from Theorem \ref{Thm2} restricts to a bijection 
\[
	\Phi':\VHC^k(S_{2k+1})\to\widetilde{\mathcal M}^c(2k+2).
\]
In particular, 
\[
	\left|\VHC^k(S_{2k+1})\right|
	=
	A_{k+1}.
\]
\end{corollary}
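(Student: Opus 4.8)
The plan is to obtain $\Phi'$ purely by restricting the bijection $\Phi\colon\VHC(S_{2k+1})\to\widetilde{\mathcal P}^c(2k+2)$ furnished by Theorem \ref{Thm2} (with $n=2k+2$). Since $\VHC^k(S_{2k+1})\subseteq\VHC(S_{2k+1})$ and $\widetilde{\mathcal M}^c(2k+2)\subseteq\widetilde{\mathcal P}^c(2k+2)$, the entire corollary will follow once I show that, for $\mathcal H\in\VHC(S_{2k+1})$ with $\Phi(\mathcal H)=(\rho,\alpha)$, the partition $\rho$ is a matching \emph{if and only if} $\mathcal H$ has exactly $k$ hooks. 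Granting this equivalence, $\Phi$ restricts to a bijection $\Phi'\colon\VHC^k(S_{2k+1})\to\widetilde{\mathcal M}^c(2k+2)$, and then $|\VHC^k(S_{2k+1})|=|\widetilde{\mathcal M}^c(2k+2)|=A_{k+1}$ by \eqref{Eq3} with $m=k+1$.

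To prove the equivalence I would compute the block sizes of $\rho$. Suppose $\mathcal H$ has $h$ hooks, so the underlying permutation $\pi\in S_{2k+1}$ has $h$ descents and a valid composition $(q_0,q_1,\dots,q_h)$, and $\rho$ has exactly $h+1$ blocks (one per color). The modified coloring used in the definition of $\Phi$ differs from the coloring of Section 3 only in that each northeast endpoint is absorbed into its hook's color class and the extra element $2k+2$ is absorbed into the blue class; each color class thus gains exactly one element. Consequently the blue block has $q_0+1$ elements and the block of the $i$-th hook has $q_i+1$ elements, so $\rho$ is a matching precisely when $(q_0,q_1,\dots,q_h)=(1,1,\dots,1)$.

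It then remains to run the two directions. If $\mathcal H$ has exactly $k$ hooks, then $\pi$ has $k$ descents, and as shown in Section 3 its unique valid composition is $(1,1,\dots,1)$ (the only composition of $(2k+1)-k=k+1$ into $k+1$ positive parts); by the block-size computation, $\rho$ is a matching. Conversely, if $\rho$ is a matching of the $(2k+2)$-element set, then it has $(2k+2)/2=k+1$ blocks; since $\rho$ has $h+1$ blocks, this forces $h=k$, i.e.\ $\mathcal H\in\VHC^k(S_{2k+1})$. The only genuine content here is the block-size bookkeeping---keeping careful track of the two structural modifications to the coloring and reading them against the valid composition---so I expect this to be the main (and fairly mild) obstacle; everything else is a direct appeal to Theorem \ref{Thm2} and to the already-proved uniqueness of the valid composition $(1,\dots,1)$ on $2k+1$ points with $k$ descents.
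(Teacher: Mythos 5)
Your proposal is correct and follows essentially the same route as the paper: both establish that $\rho$ has $h+1$ blocks of sizes $q_0+1,\ldots,q_h+1$, deduce that $\rho$ is a matching if and only if the valid composition is $(1,\ldots,1)$, which happens if and only if $h=k$, and then conclude via \eqref{Eq3}. Your write-up merely spells out the two directions (positivity of the parts for $h=k\Rightarrow$ matching, block-counting for the converse) slightly more explicitly than the paper does.
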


\begin{proof}
Let $\mathcal H$ be a normalized valid hook configuration on $2k+1$ points with $h$ hooks, and put $\Phi(\mathcal H)=(\rho,\alpha)$. The valid composition $(q_0,\ldots,q_h)$ induced from $\mathcal H$ is a composition of $2k+1-h$ into $h+1$ parts. It follows from the definition of $\Phi$ that $\rho$ has $h+1$ blocks, where the blocks are of sizes $q_0+1,\ldots,q_h+1$ in some order. We find that $\Phi(\mathcal H)\in\widetilde{\mathcal M}^c(2k+2)$ (that is, $\rho$ is a matching) if and only if $q_i=1$ for all $i$. This occurs if and only if $h=k$. This proves the first statement of the corollary. The second statement follows from the first and from \eqref{Eq3}. 
\end{proof}

In the following additional corollary to Theorem \ref{Thm2}, we adopt a notational convention from \cite{Josuat}. Given a sequence $(u_n)_{n\geq 1}$ and a set partition $\rho$, write 
\[
	u_\rho
	=
	\prod_{B\in\rho}u_{|B|}.
\]
For example, if $\rho=\{\{1,4\},\{2,7,8,9\},\{3,5,6\}\}$, then $C_{\rho-1}=C_{2-1}C_{4-1}C_{3-1}=1\cdot 5\cdot 2=10$. Recall the notation from \eqref{Eq1}. 

\begin{corollary}\label{Cor3}
We have 
\[
	\sum_{\rho\in\mathcal P^c(n)}C_{\rho-1}T_{G(\rho)}(1,0)
	=
	(n-1)!.
\]
\end{corollary}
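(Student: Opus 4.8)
The plan is to read the left-hand side through the chain of bijections already set up in the paper, collapsing it to a total preimage count for the stack-sorting map. The crucial first observation is that the summand $C_{\rho-1}$ depends only on $\rho$, not on any orientation. By Theorem \ref{Thm3} (Greene--Zaslavsky), applied to the crossing graph $G(\rho)$ exactly as in Section 2, the number $T_{G(\rho)}(1,0)$ equals the number of acyclic orientations $\alpha$ of $G(\rho)$ whose unique source is the block containing $n$. Hence each term $C_{\rho-1}T_{G(\rho)}(1,0)$ is simply the sum of $C_{\rho-1}$ over all admissible orientations $\alpha$, and
\[
	\sum_{\rho\in\mathcal P^c(n)}C_{\rho-1}T_{G(\rho)}(1,0)
	=
	\sum_{(\rho,\alpha)\in\widetilde{\mathcal P}^c(n)}C_{\rho-1}.
\]

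Next I would transport this sum along the bijection $\Phi:\VHC(S_{n-1})\to\widetilde{\mathcal P}^c(n)$ of Theorem \ref{Thm2}. Writing $\Phi(\mathcal H)=(\rho,\alpha)$, the sum becomes $\sum_{\mathcal H\in\VHC(S_{n-1})}C_{\rho-1}$, where $\rho$ is now determined by $\mathcal H$. The key local computation is to express $C_{\rho-1}$ in terms of the valid composition induced by $\mathcal H$. As recorded in the definition of $\Phi$ and reused in the proof of Corollary \ref{Cor2}, if $\mathcal H$ has $h$ hooks and induces the valid composition $(q_0,\ldots,q_h)$, then the blocks of $\rho$ have sizes $q_0+1,\ldots,q_h+1$. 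Consequently
\[
	C_{\rho-1}
	=
	\prod_{B\in\rho}C_{|B|-1}
	=
	\prod_{t=0}^{h}C_{q_t}.
\]

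I would then regroup the sum by the underlying permutation, using the decomposition $\VHC(S_{n-1})=\bigcup_{\pi\in S_{n-1}}\VHC(\pi)$. For a fixed $\pi\in S_{n-1}$, Lemma 3.1 of \cite{Defant2} says the induced-composition map $\VHC(\pi)\to\mathcal V(\pi)$ is a bijection, so by Theorem \ref{Thm1},
\[
	\sum_{\mathcal H\in\VHC(\pi)}\prod_{t=0}^{h}C_{q_t}
	=
	\sum_{(q_0,\ldots,q_k)\in\mathcal V(\pi)}\prod_{t=0}^{k}C_{q_t}
	=
	\bigl|s^{-1}(\pi)\bigr|.
\]
Summing over $\pi\in S_{n-1}$ produces $\sum_{\pi\in S_{n-1}}|s^{-1}(\pi)|$. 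Since $s$ is a self-map of $S_{n-1}$, the preimage sets $s^{-1}(\pi)$ partition $S_{n-1}$, so this total is just $|S_{n-1}|=(n-1)!$, which is the claimed identity.

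The truth is that, with Theorems \ref{Thm3}, \ref{Thm1}, and \ref{Thm2} in hand, there is no serious obstacle: the proof is a guided walk through the established correspondences, and the final step is the trivial fact that summing fiber sizes of a self-map recovers the domain's cardinality. The one point that genuinely requires care is the block-size identification $|B|=q_t+1$. Here one must check that, in passing from the coloring of $\{1,\ldots,n-1\}$ to the partition $\rho$ of $\{1,\ldots,n\}$, every color class grows by exactly one: the blue block gains the appended element $n$, while each hook-colored block gains the (previously uncolored) northeast endpoint of its hook. Verifying that these are the only adjustments is what makes the product $C_{\rho-1}$ match $\prod_t C_{q_t}$ and drives the entire reduction.
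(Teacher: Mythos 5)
Your proof is correct and is essentially the paper's own argument: both rely on Theorem \ref{Thm3} to interpret $T_{G(\rho)}(1,0)$ as counting the admissible orientations, the bijection $\Phi$ of Theorem \ref{Thm2}, the fertility formula of Theorem \ref{Thm1}, and the fact that summing the fiber sizes of $s$ over $S_{n-1}$ yields $(n-1)!$. You merely traverse the chain of equalities in the opposite direction and make explicit two points the paper compresses into ``from Theorem \ref{Thm1} and the definition of $\Phi$'' --- the block-size identification $|B|=q_t+1$ and the appeal to the $\VHC(\pi)\to\mathcal V(\pi)$ bijection --- which is a sound, slightly more careful rendering of the same proof.
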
 

\begin{proof}
Let $\Phi$ be the bijection from Theorem \ref{Thm2}. Given a normalized valid hook configuration $\mathcal H$ on $n-1$ points, let $\Phi_1(\mathcal H)$ be the set partition which is the first coordinate of $\Phi(\mathcal H)$. In other words, if $\Phi(\mathcal H)=(\rho,\alpha)$, then $\Phi_1(\mathcal H)=\rho$. We know from Theorem \ref{Thm1} and the definition of $\Phi$ that $\displaystyle |s^{-1}(\pi)|=\sum_{\mathcal H\in\VHC(\pi)}C_{\Phi_1(\mathcal H)-1}$ for every $\pi\in S_{n-1}$. Note that the total number of preimages of all permutations in $S_{n-1}$ under $s$ is $(n-1)!$. Invoking Theorem \ref{Thm2}, we find that
\[
	(n-1)!
	=
	\sum_{\pi\in S_{n-1}}|s^{-1}(\pi)|
	=
	\sum_{\pi\in S_{n-1}}\sum_{\mathcal H\in\VHC(\pi)}C_{\Phi_1(\mathcal H)-1}
	=\sum_{\mathcal H\in\VHC(S_{n-1})}C_{\Phi_1(\mathcal H)-1}
	\] \[
	=\sum_{(\rho,\alpha)\in\widetilde{\mathcal P}^c(n)}C_{\rho-1}
	=
	\sum_{\rho\in\mathcal P^c(n)}C_{\rho-1}T_{G(\rho)}(1,0).
\]
We have used the fact, which we mentioned in the paragraph following Theorem \ref{Thm3}, that $T_{G(\rho)}(1,0)$ is the number of acyclic orientations $\alpha$ such that $(\rho,\alpha)\in\widetilde{\mathcal P}^c(n)$. 
\end{proof}

Let $A_{k+1}(\ell)$ be the number of uniquely sorted permutations in $S_{2k+1}$ whose first entry is $\ell$. Corollary \ref{Cor2} provides us with a means for proving the following somewhat surprising result concerning this refinement of the Lassalle numbers. 

\begin{theorem}\label{Thm4}
For each nonnegative integer $k$, the sequence $(A_{k+1}(\ell))_{\ell=1}^{2k+1}$ is symmetric. 
\end{theorem}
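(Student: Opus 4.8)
The plan is to pass through the matching interpretation supplied by Corollary~\ref{Cor2} and to reduce the symmetry to a statement about crossing graphs that is manifestly invariant under an order-reversing relabeling. Write $m=2k+2$. First I would pin down what the first entry means on the matching side. If $\mathcal H$ is the unique valid hook configuration of a uniquely sorted $\pi=\pi_1\cdots\pi_{2k+1}\in S_{2k+1}$ and $\Phi(\mathcal H)=(\rho,\alpha)$, then the leftmost point $(1,\pi_1)$ always sees the sky (no hook can lie directly above it, since any hook's vertical segment sits at the $x$-coordinate of its southwest endpoint, and if that endpoint is $(1,\pi_1)$ then the point looks around it). Hence $\pi_1$ lies in the blue block; as $\rho$ is a matching and the blue block also contains $m$, that block is exactly $\{\pi_1,m\}$. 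Therefore, under the bijection $\Phi'$ of Corollary~\ref{Cor2}, the permutations enumerated by $A_{k+1}(\ell)$ correspond precisely to the pairs $(\rho,\alpha)\in\widetilde{\mathcal M}^c(m)$ in which the block containing $m$ is $\{\ell,m\}$.

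Next I would separate the partition from the orientation. For a fixed connected matching $\rho$ of $\{1,\ldots,m\}$, Theorem~\ref{Thm3} tells us that the number of acyclic orientations $\alpha$ with $(\rho,\alpha)\in\widetilde{\mathcal M}^c(m)$---that is, acyclic orientations whose unique source is the block containing $m$---equals $T_{G(\rho)}(1,0)$, and this value does not depend on which block is singled out. Summing over $\rho$ gives
\[
	A_{k+1}(\ell)
	=
	\sum_{\rho} T_{G(\rho)}(1,0),
\]
where the sum runs over all connected matchings $\rho$ of $\{1,\ldots,m\}$ having $\{\ell,m\}$ as a block. (Disconnected matchings may be ignored, since a graph with more than one component admits no acyclic orientation with a single source.)

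It now suffices to produce a bijection between the connected matchings having $\{\ell,m\}$ as a block and those having $\{m-\ell,m\}$ as a block that preserves $T_{G(\rho)}(1,0)$. For this I would use the involution $\sigma$ of $\{1,\ldots,m\}$ defined by $\sigma(x)=m-x$ for $1\le x\le m-1$ and $\sigma(m)=m$; that is, $\sigma$ reverses the order of $\{1,\ldots,m-1\}$ while fixing $m$. Acting blockwise, $\sigma$ carries a matching $\rho$ to a matching $\sigma(\rho)$, sends the block $\{\ell,m\}$ to $\{m-\ell,m\}$, and is its own inverse. The crux is to check that $\sigma$ preserves the crossing relation, so that $B\mapsto\sigma(B)$ is an isomorphism $G(\rho)\to G(\sigma(\rho))$; this makes $T_{G(\rho)}(1,0)=T_{G(\sigma(\rho))}(1,0)$ (an isomorphism invariant of the crossing graph) and preserves connectedness, which assembles the three reductions into the desired equality $A_{k+1}(\ell)=A_{k+1}(m-\ell)$.

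The main obstacle is exactly this invariance check, since $\sigma$ is neither order-preserving nor order-reversing on all of $\{1,\ldots,m\}$. For two blocks avoiding $m$ the verification is immediate: $\sigma$ reverses the order on $\{1,\ldots,m-1\}$, and order-reversal merely interchanges the two forms of crossing while leaving the crossing relation itself intact. The delicate case is a pair consisting of the block $\{\ell,m\}$ and a block $\{a,b\}$ with $a<b<m$. Because $m$ is the largest element and lies in a single block, these two blocks cross if and only if $a<\ell<b$; applying $\sigma$ turns this into the equivalent condition $m-b<m-\ell<m-a$. Once this single inequality is confirmed, the crossing relation is preserved across every pair of blocks, $\sigma$ is the required $T_{G}$-preserving involution, and the theorem follows.
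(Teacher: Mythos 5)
Your proposal is correct and follows essentially the same route as the paper: both reduce, via $\Phi'$ and the observation that the block containing $2k+2$ is $\{\pi_1,2k+2\}$, to matchings with a distinguished block, and both conclude with the reflection $x\mapsto (2k+2)-x$ fixing $2k+2$. The only cosmetic difference is that you count orientations through $T_{G(\rho)}(1,0)$ via Theorem~\ref{Thm3}, whereas the paper transports $\alpha$ directly along the isomorphism $G(\rho)\cong G(\rho')$ (whose crossing-preservation you verify explicitly while the paper merely asserts it); both hinge on the same fact.
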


\begin{proof}
Let $\Phi_1$ be as in the proof of Corollary~\ref{Cor3}. Let $\widetilde{\mathcal M}_\ell^c(2k+2)$ denote the set of pairs $(\rho,\alpha)\in\widetilde{\mathcal P}^c(2k+2)$ such that $\rho$ is a matching that contains the block $\{\ell,2k+2\}$. If $\mathcal H$ is the unique valid hook configuration of a uniquely sorted permutation $\pi$, then $\{\pi_1,2k+2\}$ is one of the blocks of $\Phi_1(\mathcal H)$. Therefore, $\widetilde{\mathcal M}_\ell^c(2k+2)$ is the image under $\Phi$ of the set of uniquely sorted permutations $\pi\in S_{2k+1}$ such that $\pi_1=\ell$. It now suffices to find a bijection $\widetilde{\mathcal M}_\ell^c(2k+2)\to\widetilde{\mathcal M}_{2k+2-\ell}^c(2k+2)$. 

Suppose $(\rho,\alpha)\in\widetilde{\mathcal M}_\ell^c(2k+2)$, and draw an arch diagram of $\rho$ by connecting two numbers with an arch if and only if they are in the same block. For example, the arch diagram of $\{\{1,4\},\{2,6\},\{3,8\},\{5,7\}\}$ is shown on the left in Figure~\ref{fig-arch-matching}. If we reflect all of the numbers in $\{1,\ldots,2k+1\}$ across the number $k+1$ without breaking any of the arches, we obtain a new matching $\rho'$. More formally, if $\{a,b\}$ is a block of $\rho$ that does not contain $2k+2$, then $\{2k+2-a,2k+2-b\}$ is a block of $\rho'$. Furthermore, $\{2k+2-\ell,2k+2\}$ is a block of $\rho'$. The crossing graphs $G(\rho)$ and $G(\rho')$ are naturally isomorphic, so we can transfer the acyclic orientation $\alpha$ of $G(\rho)$ to an acyclic orientation $\alpha'$ of $G(\rho')$ in the obvious fashion. The map $\widetilde{\mathcal M}_\ell^c(2k+2)\to\widetilde{\mathcal M}_{2k+2-\ell}^c(2k+2)$ given by $(\rho,\alpha)\mapsto(\rho',\alpha')$ is our desired bijection.
\end{proof}

\begin{footnotesize}
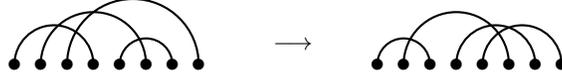
\begin{figure}
\begin{tabular}{ccc}
	\begin{tikzpicture}[scale=0.35]
		\matching{0/3, 1/5, 2/7, 4/6}
	\end{tikzpicture}
	&\quad
	\begin{tikzpicture}[scale=0.35]
		\node at (1,0) {};
		\node at (0,1) {$\longrightarrow$};
	\end{tikzpicture}
	\quad&
	\begin{tikzpicture}[scale=0.35]
		\matching{0/2, 1/5, 3/6, 4/7}
	\end{tikzpicture}
\end{tabular}
\caption{The map $\rho\mapsto\rho'$ described in the proof of Theorem \ref{Thm4}. In this specific example, we have $\rho=\{\{1,4\},\{2,6\},\{3,8\},\{5,7\}\}$ and\\ $\rho'=\{\{1,3\},\{2,6\},\{4,7\},\{5,8\}\}$.}
\label{fig-arch-matching}
\end{figure}
\end{footnotesize}

Numerical evidence suggests that $(A_{k+1}(\ell))_{\ell=1}^{2k+1}$ is log-concave (and therefore unimodal) for each nonnegative integer $k$. We state this as a conjecture in Section 6, where we collect several other suggestions for future work. 

Each uniquely sorted permutation $\pi\in S_{2k+1}$ has a unique valid hook configuration $\mathcal H$, and $\mathcal H$ has a top hook (assuming $k\geq 1$). As discussed above, this hook separates $\mathcal H$ into a sheltered piece and an unsheltered piece. Suppose $(j,\pi_j)$ is the leftmost point in the sheltered piece. We call $\pi_j$ the \emph{eye} of $\pi$. If we let $\Phi'(\alpha)=\rho$, where $\Phi'$ is the map from Corollary \ref{Cor2}, then the eye of $\pi$ is also the entry in the same block as $2k+1$ in $\rho$. We saw in Section 3 that $\pi$ corresponds to a normalized lonely tree $T$. Let $a$ be the left child of the root of $T$. In the notation introduced at the end of Section 3, the eye of $\pi$ is the label of the leftmost cousin of $a$. The following theorem shows an interesting relationship between the first entry and the eye of a uniquely sorted permutation and also provides an alternative method for studying the numbers $A_{k+1}(\ell)$ from Theorem \ref{Thm4}.   

\begin{theorem}\label{Thm5}
If $k\geq 1$, then there are exactly $A_{k+1}(\ell)$ uniquely sorted permutations in $S_{2k+1}$ with eye $\ell-1$. 
\end{theorem}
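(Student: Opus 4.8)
\textbf{The plan} is to exhibit a bijection between the set of uniquely sorted permutations in $S_{2k+1}$ with first entry $\ell$ and the set of uniquely sorted permutations in $S_{2k+1}$ with eye $\ell-1$. Recall from Theorem~\ref{Thm4} and its proof that the first entry of a uniquely sorted permutation $\pi$ corresponds (via $\Phi$) to the entry $\ell$ for which $\{\ell, 2k+2\}$ is a block of the matching $\rho = \Phi_1(\mathcal{H})$. On the other hand, the discussion preceding this theorem identifies the eye of $\pi$ as the entry that lies in the same block of $\rho$ as the element $2k+1$. So in the language of matchings, I would prove that the number of pairs $(\rho,\alpha) \in \widetilde{\mathcal{M}}^c(2k+2)$ in which $2k+2$ is matched to $\ell$ equals the number in which $2k+1$ is matched to $\ell - 1$.

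\textbf{The main tool} will be a reflection-type symmetry on matchings, analogous to (but distinct from) the one used in the proof of Theorem~\ref{Thm4}. First I would translate both statistics into the matching picture and verify carefully, using the two identifications quoted above, that ``first entry $= \ell$'' becomes ``$2k+2$ is paired with $\ell$'' and ``eye $= \ell-1$'' becomes ``$2k+1$ is paired with $\ell-1$.'' Then I would construct an involution or bijection on $\widetilde{\mathcal{M}}^c(2k+2)$ that exchanges the roles of the elements $2k+1$ and $2k+2$ in the appropriate way while shifting the partner's index by one. Concretely, I expect a cyclic relabeling or a shift of the underlying ground set—something like sending an arch diagram to the one obtained by decrementing every label mod $2k+1$ (fixing $2k+2$ suitably) and simultaneously transporting the acyclic orientation through the induced crossing-graph isomorphism—will carry a matching with $\{\ell, 2k+2\}$ as a block to one with $\{\ell-1, 2k+1\}$ as a block. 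The acyclic orientation must be transferred via the natural isomorphism of crossing graphs, exactly as in the proof of Theorem~\ref{Thm4}, and I would need to check that the source condition (that the block containing $2k+2$ is the unique source) is preserved.

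\textbf{The key steps}, in order, are: (1) rephrase both $A_{k+1}(\ell)$ and the eye-$\ell-1$ count as cardinalities of explicitly described subsets of $\widetilde{\mathcal{M}}^c(2k+2)$, using Corollary~\ref{Cor2} together with the two block-membership characterizations of the first entry and the eye; (2) define the candidate map on arch diagrams that realizes the desired relabeling of ground-set elements; (3) verify that this map sends matchings to matchings, respects crossings (so that it induces an isomorphism of crossing graphs), and hence transports the acyclic orientation while preserving connectivity and the unique-source property; and (4) confirm that the map is a bijection (ideally by exhibiting an explicit inverse of the same type) and that it interchanges the two distinguished blocks as required.

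\textbf{The main obstacle} I anticipate is the bookkeeping around the special element $2k+2$, which always plays the role of the ``sky/root'' block and whose partner encodes the first entry, versus the element $2k+1$, whose partner encodes the eye. A naive global shift of all labels by one will move $2k+2$ off its distinguished position, so the relabeling must treat $2k+2$ as fixed (it must remain the unique source) while cyclically or otherwise permuting the remaining $2k+1$ labels in a way that sends $\ell \mapsto \ell-1$ on the relevant partner and sends $2k+1$ into the partner position. Making these two constraints simultaneously consistent—so that the image is again a legitimate element of $\widetilde{\mathcal{M}}^c(2k+2)$ with the block $\{\ell-1, 2k+1\}$—is the delicate point, and I would verify it by tracking a single arch (the one touching $2k+2$ or $2k+1$) through the construction and checking the crossing pattern is genuinely preserved under the relabeling.
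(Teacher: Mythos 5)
Your overall strategy is the paper's: translate ``first entry $\ell$'' into ``$2k+2$ is matched to $\ell$'' and ``eye $\ell-1$'' into ``$2k+1$ is matched to $\ell-1$'' via $\Phi'$ (both identifications are exactly as the paper states them), then relabel arch diagrams so as to induce an isomorphism of crossing graphs and transport the orientation. The one structural difference is which relabeling you use. The paper uses the reflection $\rho\mapsto\widetilde\rho$ (replace $2k+2$ by $0$, reflect $x\mapsto 2k+1-x$, replace $0$ by $2k+2$), which sends the block $\{2k+2-\ell,\,2k+2\}$ to $\{\ell-1,\,2k+1\}$; this only shows that the eye-$(\ell-1)$ count is $A_{k+1}(2k+2-\ell)$, and the proof finishes by citing the symmetry of Theorem \ref{Thm4}. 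Your proposed single map is precisely the composition of that reflection with the reflection used in the proof of Theorem \ref{Thm4}, namely the cyclic rotation $j\mapsto j-1\pmod{2k+2}$, so you would prove the statement in one step without invoking Theorem \ref{Thm4} at all --- a modest but genuine economy.

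Two repairs are needed, however. First, the map as you literally describe it --- fix $2k+2$ and decrement the remaining labels mod $2k+1$ --- sends the block $\{\ell,2k+2\}$ to $\{\ell-1,2k+2\}$, not to $\{\ell-1,2k+1\}$. What you want is the full rotation of $\{1,\ldots,2k+2\}$: $j\mapsto j-1$ for $j\geq 2$ and $1\mapsto 2k+2$, so that $2k+2\mapsto 2k+1$ and the distinguished label $2k+2$ re-enters as the image of $1$. Chord crossings are invariant under cyclic rotation, so this does induce an isomorphism $G(\rho)\cong G(\rho')$ carrying $\{\ell,2k+2\}$ to $\{\ell-1,2k+1\}$ (for $\ell\geq 2$; when $\ell=1$ both sets are empty, since the block $\{1,2k+2\}$, respectively $\{2k+1,2k+2\}$, crosses nothing and connectivity fails). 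Second, and more importantly, the verification you defer --- ``check that the source condition is preserved'' --- will \emph{fail} for the naive transport: the unique source of $\alpha$ is the block $\{\ell,2k+2\}$, whose image under the rotation is $\{\ell-1,2k+1\}$, whereas membership in $\widetilde{\mathcal M}^c(2k+2)$ requires the unique source to be the block containing $2k+2$ (which is the image of the block containing $1$). The repair is Theorem \ref{Thm3} (Greene--Zaslavsky): for a fixed connected $\rho'$, the number of acyclic orientations of $G(\rho')$ whose unique source is any prescribed vertex equals $T_{G(\rho')}(1,0)$, independently of the vertex, so for each matching the two sets of admissible orientations are equinumerous and the desired bijection of pairs exists. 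You should be aware that the paper's own proof elides exactly the same point with the phrase ``corresponds in the obvious way'' --- its reflection likewise carries the block containing $2k+2$ to the block containing $2k+1$ --- so your plan is no worse off than the published argument, but your stated expectation that the source condition is preserved by the natural transport is false and must be replaced by this counting step.
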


\begin{proof}
Given a matching $\rho\in\mathcal M^c(2k+2)$ and a number $i\in[2k+2]$, let $\text{par}_\rho(i)$ denote the \emph{partner} of $i$ in $\rho$, which is the unique element in the same block as $i$ in $\rho$. Let $\rho^*$ be the matching of $\{0,\ldots,2k+1\}$ obtained from $\rho$ by replacing the number $2k+2$ with $0$. Let $\rho^{**}$ be the matching obtained by reflecting $\rho^*$ about the number $\dfrac{2k+1}{2}$. In other words, if $\rho^*=\{\{a_1,b_1\},\ldots,\{a_{k+1},b_{k+1}\}\}$, then $\rho^{**}=\{\{2k+1-a_1,2k+1-b_1\},\ldots,\{2k+1-a_{k+1},2k+1-b_{k+1}\}\}$. Finally, let $\widetilde\rho$ be the matching in $\mathcal M(2k+2)$ obtained by replacing the number $0$ in $\rho^{**}$ with $2k+2$. It is straightforward to check that the crossing graphs $G(\rho)$ and $G(\widetilde\rho)$ are isomorphic. Therefore, every acyclic orientation $\alpha$ of $G(\rho)$ corresponds in the obvious way to an acyclic orientation $\widetilde\alpha$ of $G(\widetilde\rho)$. Since $\rho$ is connected, $\text{par}_\rho(2k+2)\neq 2k+1$. This implies that $\text{par}_{\rho^*}(0)\neq 2k+1$, so $\text{par}_{\rho^{**}}(2k+1)\neq 0$. Thus, \[\text{par}_\rho(2k+2)=\text{par}_{\rho^*}(0)=2k+1-\text{par}_{\rho^{**}}(2k+1)=2k+1-\text{par}_{\widetilde\rho}(2k+1).\] As a consequence, we obtain a bijection \[\{(\rho,\alpha)\in\widetilde{\mathcal M}^c(2k+2):\text{par}_{\rho}(2k+2)=2k+2-\ell\}\to\{(\widetilde\rho,\widetilde\alpha)\in\widetilde{\mathcal M}^c(2k+2):\text{par}_{\widetilde\rho}(2k+1)=\ell-1\}.\] 

Now consider the bijection $\Phi'$ from Corollary \ref{Cor2} (which is a restriction of the map $\Phi$). The set \[(\Phi')^{-1}(\{(\rho,\alpha)\in\widetilde{\mathcal M}^c(2k+2):\text{par}_{\rho}(2k+2)=2k+2-\ell\})\] is the set of valid hook configurations in $\VHC^k(S_{2k+1})$ in which the leftmost point in the plot has height $2k+2-\ell$. This set is naturally in bijection (by just taking the underlying permutation of each valid hook configuration) with the set of uniquely sorted permutations $\pi\in S_{2k+1}$ with $\pi_1=2k+2-\ell$. By definition, the size of this set is $A_{k+1}(2k+2-\ell)$. Therefore, it follows from the above bijection that \[|\{(\widetilde\rho,\widetilde\alpha)\in\widetilde{\mathcal M}^c(2k+2):\text{par}_{\widetilde\rho}(2k+1)=\ell-1\}|=A_{k+1}(2k+2-\ell).\] The set \[(\Phi')^{-1}(\{(\widetilde\rho,\widetilde\alpha)\in\widetilde{\mathcal M}^c(2k+2):\text{par}_{\widetilde\rho}(2k+1)=\ell-1\})\] is naturally in bijection with the set of uniquely sorted permutations in $S_{2k+1}$ with eye $\ell-1$. Hence, the number of uniquely sorted permutations in $S_{2k+1}$ with eye $\ell-1$ is $A_{k+1}(2k+2-\ell)$. The desired result now follows from Theorem~\ref{Thm4}, which tells us that $A_{k+1}(2k+2-\ell)=A_{k+1}(\ell)$. 
\end{proof}

\section{New Recurrences}

We know from \eqref{Eq2} and Theorem \ref{Thm2} that the total number of normalized valid hook configurations on $n$ points is $-k_{n+1}(-1)$. We know from \eqref{Eq3} and Corollary \ref{Cor2} that the total number of normalized valid hook configurations on $2k+1$ points with $k$ hooks is $A_{k+1}$. In this section, we study the combinatorial properties of valid hook configurations in order to derive new recurrence relations for these numbers. These recurrences keep track of a permutation statistic that we call the \emph{tail length}. In what follows, the \emph{normalization} of a permutation $\pi=\pi_1\cdots\pi_n$ is the unique permutation in $S_n$ that is order isomorphic to $\pi$. For example, the normalization of $26589$ is $13245$.  

\begin{definition}
The \emph{tail length} of a permutation $\pi=\pi_1\cdots\pi_n\in S_n$, denoted $\text{tl}(\pi)$, is the smallest nonnegative integer $i$ such that $\pi_{n-i}\neq n-i$. The tail length of an arbitrary permutation is the tail length of its normalization. 
\end{definition}

For example, the permutation $31524678$ has tail length $3$, while the permutation $26589$ has tail length $2$. For an indication of the relevance of this statistic for our purposes, observe that a sorted permutation (equivalently, a permutation that has a valid hook configuration) must have a positive tail length. Heuristically, we should expect the fertility of a permutation in $S_n$ with a large tail length to be larger than the fertility of a permutation in $S_n$ with a small tail length. 

Let 
\[
	D_m(n)
	=
	\sum_{\substack{\pi\in S_n\\ \text{tl}(\pi)=m}}|\VHC(\pi)|
\] 
be the total number of valid hook configurations whose underlying permutations are elements of $S_n$ with tail length $m$. Let 
\[
	D_{\geq m}(n)
	=
	\sum_{\ell=m}^{n} D_{\ell}(n).
\] 
In particular, $D_{\geq 0}(n)=-k_{n+1}(-1)$ is the total number of normalized valid hook configurations on $n$ points.  

\begin{theorem}\label{Thm6}
The numbers $D_m(n)$ and $D_{\geq m}(n)$ defined above satisfy the recurrence
\[
	D_m(n)
	=
	\sum_{j=1}^m\sum_{i=1}^{n-m-1}{n-m-1\choose i-1}D_{\geq j}(i+j-1)D_{\geq m-j}(n-j-i)
\] 
for $0\leq m<n$. The initial conditions are given by $D_n(n)=1$. 
\end{theorem}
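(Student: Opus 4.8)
The recurrence should follow from the top hook decomposition introduced just before the definition of $\Phi$, combined with the tail length bookkeeping. The idea is that a valid hook configuration $\mathcal H$ on $\pi \in S_n$ with tail length $m \geq 1$ is determined by its top hook together with the sheltered and unsheltered pieces, each of which is itself (the normalization of) a smaller valid hook configuration. So I would first argue that the map $\mathcal H \mapsto (\text{top hook}, \mathcal H_S, \mathcal H_U)$ is a bijection onto an appropriate indexing set, then translate the constraints on the two pieces into constraints on their sizes and tail lengths, and finally read off the summation.

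\textbf{Key steps, in order.} First I would observe that for $\pi \in S_n$ with $\mathrm{tl}(\pi)=m$, the entries $n-m+1, \ldots, n$ occupy the last $m$ positions in increasing order and sit above the top hook's northeast endpoint; the index $j$ in the outer sum will record which ``level'' the top hook reaches, i.e.\ how the sheltered tail interacts with the overall tail. Concretely, the top hook has some northeast endpoint, and the sheltered piece $\mathcal H_S$ inherits a tail of length $j$ (the number of entries that are forced to the top-right inside the shelter) while the unsheltered piece $\mathcal H_U$ carries tail length $m-j$, which explains the split $m = j + (m-j)$ with $1 \le j \le m$. Second, I would normalize each piece: $\mathcal H_S$ normalizes to a valid hook configuration on some number of points, and the $+j-1$ and $-j$ shifts in the arguments $i+j-1$ and $n-j-i$ account for the entries that are shared with or removed by the top hook (its two endpoints and the tail entries that are absorbed). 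Third, the binomial coefficient $\binom{n-m-1}{i-1}$ is the number of ways to interleave the value sets of the two normalized pieces among the $n-m-1$ ``free'' values (those not pinned down by the tail of length $m$ and not equal to the top hook's northeast endpoint), with $i$ being the number of points in the sheltered piece (so $i-1$ of the $n-m-1$ free values go to it, the rest to the unsheltered piece). Using $D_{\ge j}$ rather than $D_j$ encodes that each subpiece is counted over \emph{all} admissible tail lengths at least $j$ (resp.\ $m-j$), since the constraint that the top hook exists only bounds the tail of each piece from below. Finally, the initial condition $D_n(n)=1$ is immediate: the only permutation in $S_n$ with tail length $n$ is the decreasing-free case forcing $\pi$ to have no fixed suffix at all, namely $\pi_1 \neq 1$ with a unique empty-or-forced hook configuration; I would check that this is exactly the identity-type base case where no top hook decomposition applies.

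\textbf{The main obstacle.} The hard part will be verifying precisely which entries get ``consumed'' by the top hook and its endpoints, so that the shift parameters $i+j-1$ and $n-j-i$ and the cutoff $n-m-1$ in the binomial coefficient come out exactly right. In particular I must be careful that the normalizations of $\mathcal H_S$ and $\mathcal H_U$ are genuinely independent valid hook configurations with no residual interaction beyond the interleaving of their value sets, and that the tail-length conditions transfer correctly under normalization. I expect the cleanest route is to set up an explicit bijection
\[
	\{\mathcal H \in \VHC(\pi): \pi \in S_n,\ \mathrm{tl}(\pi)=m\}
	\ \longleftrightarrow\
	\bigsqcup_{j=1}^{m}\ \bigsqcup_{i=1}^{n-m-1}\ \binom{n-m-1}{i-1}\text{-fold copies of }
	\VHC_{\ge j}(i+j-1)\times \VHC_{\ge m-j}(n-j-i),
\]
where $\VHC_{\ge \ell}(N)$ denotes the normalized valid hook configurations on $N$ points whose underlying permutation has tail length at least $\ell$, and then appeal to Lemma~\ref{Lem1} to confirm that within each color class (hence within each piece) the coloring is consistent after normalization. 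Once the bijection is established, summing over the index set and invoking $D_{\ge j}(i+j-1)=\sum_{\ell \ge j} D_\ell(i+j-1)$ gives the stated recurrence directly.
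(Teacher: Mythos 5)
Your plan rests on the top-hook decomposition, but that is not the decomposition the paper uses, and it cannot be made to yield this recurrence as you describe. The paper's proof instead exploits the specific entry $n-m$: since $\mathrm{tl}(\pi)=m$, the values $n-m+1,\ldots,n$ occupy the last $m$ positions, so $n-m$ sits at some position $i\in\{1,\ldots,n-m-1\}$, which is automatically a descent; hence there is a hook $H$ with southwest endpoint $(i,n-m)$, whose northeast endpoint must be a tail point $(n-j+1,n-j+1)$ with $1\le j\le m$. The two pieces are then the part of $\mathcal H$ strictly below $H$ (a permutation of length $n-j-i$ with tail length $\ge m-j$) and everything else minus $H$ and its northeast endpoint (length $i+j-1$, tail length $\ge j$), with the prefix values chosen freely in $\binom{n-m-1}{i-1}$ ways. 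This $H$ is generally \emph{not} the top hook: in the paper's own Example~\ref{Exam1} (Figure~\ref{fig-recurrence}), $H$ is the hook from $(5,8)$ to $(10,10)$, while the top hook is the one from $(3,6)$ to $(11,11)$.

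The gap in your version is that the top-hook decomposition is not free, so the product $D_{\ge j}(i+j-1)\,D_{\ge m-j}(n-j-i)$ does not count it. If the unsheltered piece, viewed as a standalone valid hook configuration, had a hook from a prefix point to one of its trailing points, then after re-embedding that hook's northeast endpoint would lie strictly higher than the top hook's, contradicting maximality; so unsheltered pieces range only over the proper subclass of configurations with no prefix-to-suffix hooks. For instance, the configuration on $3142567$ with hooks $(1,3)$--$(3,4)$ and $(3,4)$--$(6,6)$ is counted by $D_{\ge 3}(7)$ and arises as the complementary piece in the paper's decomposition (Example~\ref{Exam1}), yet it can never be the unsheltered piece of a top-hook decomposition --- so your claimed bijection onto the full product overcounts. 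A second obstruction: the top hook's southwest endpoint is a descent top whose value is just ``some'' prefix value, so validity forces the first sheltered value to be smaller than it, coupling the two value sets and ruining the free binomial interleaving. The paper's choice of $H$ dissolves both problems simultaneously: its southwest value $n-m$ dominates every value below $H$ (the descent condition is automatic), and any hook of the complementary piece running from the prefix to a tail point passes entirely \emph{above} $H$'s northeast endpoint without intersecting it, so \emph{every} configuration with tail length $\ge j$ recombines validly. Finally, your base case is garbled: $\mathrm{tl}(\pi)=n$ means $\pi_{n-i}=n-i$ for all $0\le i\le n-1$, i.e., $\pi$ is the identity --- all trailing fixed points, not ``no fixed suffix'' --- and $D_n(n)=1$ records its unique (hookless) valid hook configuration; you should also note, as the paper does, that the case $m=0$ holds because a permutation with tail length $0$ is unsorted and the right-hand sum is empty.
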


\begin{proof}
The initial condition $D_n(n)=1$ is the statement that the identity permutation has a unique valid hook configuration (the one with no hooks). The recurrence is obvious when $0=m<n$ since a permutation in $S_n$ with tail length $0$ has no valid hook configurations. 

Now suppose $0<m<n$. To produce a valid hook configuration of a permutation $\pi=\pi_1\cdots\pi_n\in S_n$ with $\text{tl}(\pi)=m$, begin by choosing the index $i\in\{1,\ldots,n-m-1\}$ such that $\pi_i=n-m$. Note that $i$ must be a descent of this permutation. This implies that there must be a hook $H$ with southwest endpoint $(i,n-m)$. The northeast endpoint of this hook is of the form $(n-j,n-j)$ for some $j\in\{0,\ldots,m-1\}$. There are ${n-m-1\choose i-1}$ choices for the entries in the set $\{\pi_1,\ldots,\pi_{i-1}\}$. Note that $\pi_{i+1}\cdots\pi_{n-j}$ will be a (not necessarily normalized) permutation of length $n-j-i$ with tail length at least $m-j$. Choosing the part of the valid hook configuration that lies below $H$ amounts to choosing $\pi_{i+1}\cdots\pi_{n-j}$ and choosing a valid hook configuration on this permutation. There are $D_{\geq m-j}(n-j-i)$ ways to do this. Similarly, there are $D_{\geq j}(i+j-1)$ ways to choose the hooks on the points that are not $(n-j,n-j)$ and do not lie below $H$. 
\end{proof}

\begin{footnotesize}
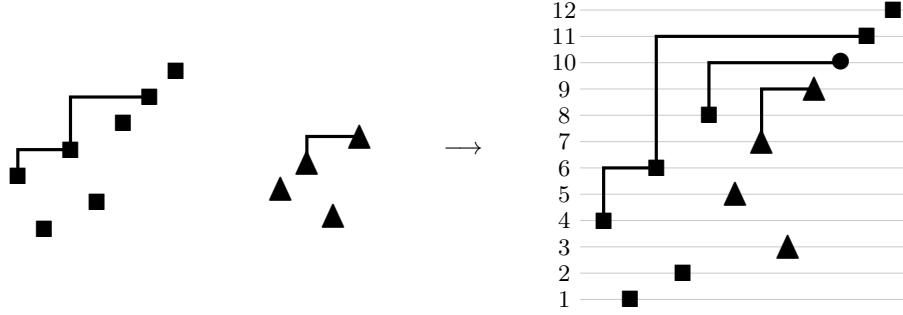
\begin{figure}
\begin{tabular}{ccccccc}
	\begin{tikzpicture}[scale=0.35]
	
		\foreach \val [count=\idx] in {3,1,4,2,5,6,7} {
			\abssquare{(\idx,\val)}
		}
		\hook{1}{3}{3}{4}
		\hook{3}{4}{6}{6}
		
		\node at (0,-2) {};
	\end{tikzpicture}
	&\quad&
	\begin{tikzpicture}[scale=0.35]
	
		\foreach \val [count=\idx] in {2,3,1,4} {
			\abstriangle{(\idx,\val)}
		}
		\hook{2}{3}{4}{4}
		
		\node at (0,-2.5) {};
	
	\end{tikzpicture}
	&\quad&
	\begin{tikzpicture}[scale=0.35]
	
		\node at (0,0) {};
		\node at (0,6) {$\longrightarrow$};
	
	\end{tikzpicture}
	&\quad&
	\begin{tikzpicture}[scale=0.35]
	
		\foreach \val in {1,...,12} {
			\node at (-0.5, \val) {$\val$};
			\draw [lightgray] (0.1,\val) -- (12.5,\val);
		}
		\foreach \val [count=\idx] in {4,1,6,2,8} {
			\abssquare{(\idx,\val)}
		}
		
		\foreach \val [count=\idx] in {5,7,3,9} {
			\abstriangle{(\idx+5,\val)}
		}
		
		\absdot{(10,10)}
		\abssquare{(11,11)}
		\abssquare{(12,12)}
		
		\hook{1}{4}{3}{6}
		\hook{3}{6}{11}{11}
		\hook{5}{8}{10}{10}
		\hook{7}{7}{9}{9}	
	
	\end{tikzpicture}
\end{tabular}
\caption{Two small valid hook configurations combine into a larger one as described in the proof of Theorem \ref{Thm6}.}
\label{fig-recurrence}
\end{figure}
\end{footnotesize}

\begin{example}\label{Exam1}
Figure~\ref{fig-recurrence} shows the construction of a valid hook configuration of a permutation $\pi\in S_{12}$ with tail length $4$. Here, we have chosen $i=5$, $j=3$, and $\{\pi_1,\pi_2,\pi_3,\pi_4\}=\{1,2,4,6\}$. We have also chosen two small valid hook configurations, which combine to form the large valid hook configuration on the right. The points coming from the first small valid hook configuration are represented as squares while the points coming from the second one are represented as triangles. Note that the point $(10,10)$ is represented by a disc because it does not come from either of these small valid hook configurations (this is because it is the northeast endpoint of the hook $H$).   
\end{example}

Let $E_m(n)$ be the number of uniquely sorted permutations in $S_n$ with tail length $m$. Of course, $E_m(n)=0$ when $n$ is even since there are no uniquely sorted permutations of even length. Let 
\[
	E_{\geq m}(n)
	=
	\sum_{\ell=m}^nE_\ell(n).
\] 
In particular, $E_{\geq 0}(2k+1)=A_{k+1}$ is the total number of uniquely sorted permutations in $S_{2k+1}$. 

\begin{theorem}\label{Thm7}
The numbers $E_m(n)$ and $E_{\geq m}(n)$ defined above satisfy the recurrence
\[
	E_m(n)
	=
	\sum_{j=1}^m\sum_{i=1}^{n-m-1}{n-m-1\choose i-1}E_{\geq j}(i+j-1)E_{\geq m-j}(n-j-i)
\] 
for $0\leq m<n$. The initial conditions are given by $E_1(1)=1$ and $E_n(n)=0$ for $n\neq 1$. 
\end{theorem}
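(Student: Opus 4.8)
The plan is to recognize Theorem~\ref{Thm7} as the ``uniquely sorted'' analogue of Theorem~\ref{Thm6} and to prove it by restricting the construction used there to valid hook configurations that carry the maximum possible number of hooks. Recall from Proposition~\ref{Prop1} (and the discussion preceding it) that a permutation $\pi\in S_n$ is uniquely sorted precisely when $n=2k+1$ and its valid hook configuration uses exactly $k=(n-1)/2$ hooks, which is the largest number of hooks any valid hook configuration on $n$ points can have. Thus $E_m(n)$ counts exactly those valid hook configurations enumerated by $D_m(n)$ that in addition attain this maximal hook count. My strategy is therefore to run the decomposition from the proof of Theorem~\ref{Thm6} and to show that it restricts cleanly to the sub-collection of maximal configurations, turning each factor $D_{\geq\cdot}(\cdot)$ into the corresponding $E_{\geq\cdot}(\cdot)$ while leaving the binomial coefficient and the index ranges untouched.

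The key structural observation is that the number of hooks is additive under the Theorem~\ref{Thm6} decomposition. Given $\pi\in S_n$ with $\text{tl}(\pi)=m$, that decomposition selects the index $i$ with $\pi_i=n-m$, the hook $H$ with southwest endpoint $(i,n-m)$ and northeast endpoint $(n-j,n-j)$, a subset of size $i-1$ for $\{\pi_1,\dots,\pi_{i-1}\}$, and then splits the remaining data into a sheltered permutation $\pi_{i+1}\cdots\pi_{n-j}$ of length $n-j-i$ and an unsheltered permutation of length $i+j-1$. Every hook of $\mathcal H$ other than $H$ lies entirely inside one of these two pieces, and the single point $(i,n-m)$ serving as the southwest endpoint of $H$ belongs to neither sub-permutation, so the total hook count equals $1+h_S+h_U$, where $h_S$ and $h_U$ are the hook counts of the sheltered and unsheltered sub-configurations. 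Moreover the two pieces together with that connector point account for all $n$ points, so their lengths satisfy $L_S+L_U=(n-j-i)+(i+j-1)=n-1$.

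The crux—and the step I expect to require the most care—is to show that $\mathcal H$ attains the maximal hook count if and only if both sub-configurations do. Using the bound $h\leq\lfloor(L-1)/2\rfloor$ for a configuration on $L$ points together with $L_S+L_U=n-1$, one computes
\[
	1+\left\lfloor\tfrac{L_S-1}{2}\right\rfloor+\left\lfloor\tfrac{L_U-1}{2}\right\rfloor
	\leq
	1+\frac{(n-1)-2}{2}
	=
	\frac{n-1}{2},
\]
with equality forcing both $L_S$ and $L_U$ to be odd and both $h_S,h_U$ to be maximal. Since a configuration of odd length with the maximal number of hooks is exactly one of a uniquely sorted permutation, equality holds if and only if the sheltered and unsheltered permutations are each uniquely sorted. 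Consequently, restricting the Theorem~\ref{Thm6} construction to uniquely sorted $\pi$ restricts each sub-configuration to a uniquely sorted one of the same length and tail-length range, which is precisely the content of replacing $D_{\geq j}(i+j-1)$ and $D_{\geq m-j}(n-j-i)$ by $E_{\geq j}(i+j-1)$ and $E_{\geq m-j}(n-j-i)$. The tail-length bookkeeping—that the sheltered piece has tail length at least $m-j$ and the unsheltered piece at least $j$—is identical to that of Theorem~\ref{Thm6} and needs no modification. This yields the stated recurrence for $0<m<n$, while for $m=0$ both sides vanish, since the sum is empty and a uniquely sorted permutation has positive tail length.

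Finally, for the initial conditions, tail length $n$ forces $\pi$ to be the identity permutation of $S_n$, which has no descents and hence is uniquely sorted if and only if $n=2\cdot 0+1=1$; this gives $E_1(1)=1$ and $E_n(n)=0$ for $n\neq 1$.
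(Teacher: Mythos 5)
Your proposal is correct in substance and runs on the same decomposition as the paper's proof, but the mechanism you use to justify the restriction is genuinely different. The paper's argument invokes the coloring characterization: by the discussion preceding Proposition~\ref{Prop1}, a permutation is uniquely sorted if and only if the coloring induced by its valid hook configuration assigns no two distinct points the same color (equivalently, the unique valid composition is $(1,\ldots,1)$), and since color classes split across the two pieces, the construction of Theorem~\ref{Thm6} restricts verbatim with $E$ in place of $D$ --- the paper simply declares the proof ``virtually identical.'' You instead characterize unique sortedness by maximal hook count via Proposition~\ref{Prop1}, observe that hook counts are additive across the decomposition with $L_S+L_U=n-1$, and use the bound $h\leq\lfloor(L-1)/2\rfloor$ to show equality holds exactly when both pieces are maximal of odd length. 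This extremal argument makes explicit what the paper leaves implicit --- that $\pi$ is uniquely sorted if and only if both the sheltered and unsheltered pieces are --- and your parity computation transparently covers even $n$ as well, where one of $L_S,L_U$ must be even and the corresponding $E$-factor vanishes. The cost is the extra floor/parity bookkeeping that the coloring criterion sidesteps.

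One local error needs fixing: you assert that the point $(i,n-m)$, the southwest endpoint of $H$, belongs to neither sub-permutation. In fact it is the \emph{northeast} endpoint of $H$ (the tail point $(n-j+1,n-j+1)$ in the recurrence's indexing) that is excluded, while the southwest endpoint remains in the unsheltered piece; this matches the paper's top-hook convention, and it is forced. A hook of $\mathcal H$ other than $H$ may terminate at the southwest endpoint of $H$: in Figure~\ref{fig-hook-decomp}, the hook from $(1,5)$ to $(3,7)$ ends precisely at the southwest endpoint of the hook $H$ from $(3,7)$ to $(9,9)$. Deleting that point would therefore leave a dangling hook and falsify your claim that every hook other than $H$ lies entirely in one piece. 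By contrast, no other hook can touch the northeast endpoint of $H$, since it is a non-descent tail point (hence no hook's southwest endpoint) and two hooks cannot share a northeast endpoint by condition 3 of Definition~\ref{Def1}. With this swap, your length count $(n-j-i)+(i+j-1)=n-1$, the additivity $1+h_S+h_U$, and the tail-length accounting are all unchanged, so the rest of your argument goes through as written.
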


\begin{proof}
Suppose $\mathcal H$ is a valid hook configuration of a permutation $\pi$. It follows from the discussion preceding Proposition~\ref{Prop1} that $\pi$ is uniquely sorted if and only if the coloring of the plot of $\pi$ induced by $\mathcal H$ does not give any two distinct points the same color. The proof of Theorem~\ref{Thm7} is now virtually identical to that of Theorem~\ref{Thm6}. Specifically, we start to construct the valid hook configuration of a uniquely sorted permutation $\pi\in S_n$ with $\text{tl}(\pi)=m$ by first choosing the index $i$ such that $\pi_i=n-m$. As before, $(i,n-m)$ must be the southwest endpoint of a hook $H$. We choose $j$ such that $(n-j,n-j)$ is the northeast endpoint of $H$. We then choose the set $\{\pi_1,\ldots,\pi_{i-1}\}$ in ${n-m-1\choose i-1}$ ways. Finally, we choose the part of the valid hook configuration lying below $H$ in $E_{\geq m-j}(n-j-i)$ ways and choose the part not lying below $H$ in $E_{\geq j}(i+j-1)$ ways.        
\end{proof}

\section{Future Work}
Through different ways of phrasing the main results of our paper, we obtain several possible avenues for potential generalizations. For example, it would be interesting to enumerate doubly sorted permutations, which  are simply permutations with fertility $2$. Arguing as in Section 2, one can show that there are no doubly sorted permutations of odd length. Letting $B_m$ denote the number of doubly sorted permutations of length $2m$, we have $B_1=1$, $B_2=3$, $B_3=31$, and $B_4=1186$. This sequence appears to be new. 

A permutation in $S_n$ is uniquely sorted if and only if it is sorted and has $\frac{n-1}{2}$ descents. From this point of view, it would be interesting to count sorted permutations in $S_n$ with exactly $k$ descents, where $k$ could be a function of $n$. For example, we could ask how many sorted permutations in $S_n$ have exactly $\frac{n-2}{2}$ descents. It might also be interesting to enumerate uniquely sorted permutations according to certain statistics, such as the number of inversions, the major index, or the number of peaks. 

Of course, uniquely sorted permutations in $S_n$ are in bijection with permutations $\pi\in S_n$ such that $s(\pi)$ is uniquely sorted. As mentioned in the previous paragraph, $s(\pi)$ is uniquely sorted if and only if $s(\pi)$ has exactly $\frac{n-1}{2}$ descents. This leads us to ask for the total number of permutations $\pi\in S_n$ such that $s(\pi)$ has exactly $k$ descents. Again, $k$ could be a function of $n$ here. 

We have seen that every uniquely sorted permutation has exactly one valid hook configuration. It could be interesting to count the total number of permutations in $S_n$ that have exactly one valid hook configuration. 

The current author and Kravitz \cite{DefantKravitz} have formulated two extensions of the stack-sorting map defined on words. It might be fruitful to consider the appropriate notions of ``uniquely sorted words.''

In Theorem \ref{Thm4}, we saw that the sequence $(A_{k+1}(\ell))_{\ell=1}^{2k+1}$ is symmetric for each nonnegative integer $k$. Recall that a sequence $a_1,\ldots,a_m$ is called \emph{unimodal} if there exists $j\in\{1,\ldots,m\}$ such that $a_1\leq\cdots\leq a_{j-1}\leq a_j\geq a_{j+1}\geq\cdots\geq a_m$ and is called \emph{log-concave} if $a_j^2\geq a_{j-1}a_{j+1}$ for all $j\in\{2,\ldots,m-1\}$ \cite{Branden}. It is well-known that a log-concave sequence of nonnegative real numbers is unimodal.

\begin{conjecture}\label{Conj1}
For each nonnegative integer $k$, the sequence $(A_{k+1}(\ell))_{\ell=1}^{2k+1}$ is log-concave. 
\end{conjecture}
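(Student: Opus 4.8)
The plan is to prove the stronger statement that the generating polynomial
\[
	P_k(x)=\sum_{\ell=1}^{2k+1}A_{k+1}(\ell)\,x^{\ell}
\]
has only real roots. Since its coefficients are nonnegative, Newton's inequalities then force the coefficient sequence to be log-concave (and, as a byproduct, to have no internal zeros), which is exactly the assertion of the conjecture. This reduction is attractive because real-rootedness is preserved and detected by robust tools (interlacing, common interlacers, multiplier sequences). Moreover, since the largest value is forced to the final position (Section~3) we have $A_{k+1}(2k+1)=0$, and Theorem~\ref{Thm4} then gives $A_{k+1}(1)=0$ as well, so $P_k(x)=x^{2}R_k(x)$ for a palindromic $R_k$ of degree $2k-2$; it suffices to show each $R_k$ is real-rooted. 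The first cases are encouraging: one computes $P_1(x)=x^{2}$ and $P_2(x)=x^{2}(x^{2}+3x+1)$, both of which are real-rooted.

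To obtain real-rootedness for all $k$, I would set up a recurrence for the family $(P_k)_{k\ge 1}$ and propagate real-rootedness through it via an interlacing argument. The recurrence should come from one of the recursive structures already developed in the paper. The cleanest candidate is the top-hook decomposition of Section~3: a uniquely sorted $\pi\in S_{2k+1}$ splits as $\pi=\tau\mu(2k+1)$ with $\tau,\mu$ uniquely sorted, and one checks that $\pi_1=\tau_1$ while the eye of $\pi$ equals $\mu_1$. Because Theorem~\ref{Thm5} shows the eye is distributed as the first entry (up to the shift $\ell\mapsto\ell-1$), we are free to track whichever of these two statistics behaves better under the decomposition. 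Alternative launching points are the connected-matching model of Corollary~\ref{Cor2}, in which $A_{k+1}(\ell)$ counts pairs $(\rho,\alpha)$ whose matching contains the arch $\{\ell,2k+2\}$, and the tail-length recurrence of Theorem~\ref{Thm7}, refined so as to record the first entry of the unsheltered piece.

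The main obstacle is that every one of these decompositions convolves the two smaller uniquely sorted objects by \emph{shuffling their value sets}: $\tau$ and $\mu$ partition $\{1,\dots,2k\}$ subject only to the inequality $\max\tau>\mu_1$. Consequently $\pi_1=\tau_1$ is the value occupying a fixed position of $\tau$ \emph{after} the merge, and its actual size depends on how many of $\tau$'s values were placed below it. Thus the polynomial recurrence is not a clean product $P_a(x)P_b(x)$ but a binomial-weighted transform of the constituents, and it is not a priori clear that such a transform preserves real-rootedness. I expect the crux to be finding a refinement---most naturally, tracking the first entry together with the number of values inserted below it, giving a bivariate family $P_k(x,y)$---in which the value-shuffle is absorbed into a substitution and the resulting operator is manifestly real-rootedness-preserving; one then has to verify the requisite interlacing of consecutive members of the family, which is the genuinely hard analytic step.

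If real-rootedness turns out to fail for large $k$, the fallback is a direct injective proof of the local inequalities $A_{k+1}(\ell)^2\ge A_{k+1}(\ell-1)\,A_{k+1}(\ell+1)$. This would be carried out in the connected-matching model of Corollary~\ref{Cor2} by surgery on the arch containing $2k+2$, using the reflection bijection from the proof of Theorem~\ref{Thm4} both to realize the symmetry and to pair the two ``off-diagonal'' families against the ``diagonal'' one.
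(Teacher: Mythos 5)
This statement is Conjecture~\ref{Conj1}; the paper does not prove it at all, but only verifies it computationally for $0\leq k\leq 5$, so there is no proof of record to compare yours against --- the only question is whether your proposal itself closes the gap, and it does not. Your preliminary observations are correct: since every sorted permutation ends in its largest entry, $A_{k+1}(2k+1)=0$ for $k\geq 1$ (beware the edge case $k=0$, where $A_1(1)=1$), and Theorem~\ref{Thm4} then gives $A_{k+1}(1)=0$; in the top-hook decomposition $\pi=\tau\mu(2k+1)$ one does have $\pi_1=\tau_1$ and eye of $\pi$ equal to $\mu_1$, consistent with Theorem~\ref{Thm5}; and your polynomial $P_2(x)=x^2(x^2+3x+1)$ matches a direct enumeration of the five uniquely sorted permutations of $S_5$, whose first entries are $2,3,3,3,4$. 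The reduction of log-concavity to real-rootedness of $P_k$ via Newton's inequalities is also standard and valid.

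Everything beyond that, however, is a plan rather than an argument, and you flag this yourself. You never write down the recurrence for the family $(P_k)$ or the proposed bivariate refinement $P_k(x,y)$, never identify the operator the recurrence induces, never show that operator preserves real-rootedness, and never establish the interlacing of consecutive members --- the step you explicitly call ``the genuinely hard analytic step.'' The obstacle you identify is genuine: because the merge of $\tau$ and $\mu$ redistributes the value set $\{1,\ldots,2k\}$ subject only to $\max\tau>\mu_1$, the statistic $\pi_1$ is not determined by the first-entry statistics of $\tau$ and $\mu$ separately, so the recurrence is a binomial-weighted transform that does not belong to the standard catalogue of real-rootedness-preserving operations, and it is entirely possible that real-rootedness fails for large $k$ even though log-concavity holds (your own text hedges between the two outcomes, which underscores that nothing is proved). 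The fallback --- an injection proving $A_{k+1}(\ell)^2\geq A_{k+1}(\ell-1)A_{k+1}(\ell+1)$ by surgery on the arch containing $2k+2$ in the model of Corollary~\ref{Cor2} --- is likewise only named, not constructed. As it stands, this is a sensible research program with correct groundwork, but the conjecture remains open both in the paper and in your proposal.
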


Even if Conjecture \ref{Conj1} is too difficult to prove, it would still be of great interest to prove the weaker claim that these sequences are unimodal. We have verified Conjecture \ref{Conj1} for $0\leq k\leq 5$. 

When defining the bijection $\Phi$ that appears in Theorem \ref{Thm2}, we described how to obtain a partition $\rho=\{B_1,\ldots,B_{k+1}\}$ of $\{1,\ldots,n\}$ from a valid hook configuration $\mathcal H$ of a permutation $\pi=\pi_1\cdots\pi_{n-1}\in S_{n-1}$. This is done by first coloring the points in the plot of $\pi$ and then coloring the number $\pi_i$ the same color as the point $(i,\pi_i)$. After coloring $n$ blue, we obtain a partition of $\{1,\ldots,n\}$ into color classes. In the proof of Theorem \ref{Thm2}, we showed that this partition is connected (that is, its crossing graph is connected). We can obtain another set partition $\eta=\{\widehat{B}_1,\ldots,\widehat{B}_{k+1}\}$ from $\mathcal H$. To do this, color the points $(i,\pi_i)$ as before, but this time, color a number $i$ the same color as $(i,\pi_i)$. This will partition $\{1,\ldots,n-1\}$ into color classes. It follows from the rules defining valid hook configurations that $\eta$ is a noncrossing partition (that is, its crossing graph has no edges). It would be interesting to investigate possible connections between the partitions $\rho$ and $\eta$ that are obtained from the same valid hook configuration $\mathcal H$. We could also study the noncrossing partitions arising in this way in their own right. Noncrossing partitions are fundamental objects in the combinatorics of free probability theory, so it would be interesting to see if the noncrossing partitions obtained from valid hook configurations in this manner have some deeper significance.    

Let us remark that the first author has now extended the investigations initiated in this article by considering uniquely sorted permutations and valid hook configurations that avoid various patterns \cite{DefantCatalan, DefantMotzkin}. Pattern-avoiding uniquely sorted permutations were studied further by Mularczyk \cite{Hanna}, and pattern-avoiding valid hook configurations were studied further by Sankar \cite{Maya}. There is still much to be done in both of these lines of work. In particular, see the end of Sankar's paper for some remarkable conjectures about $312$-avoiding ``reduced'' valid hook configurations.  

\vspace{-.2cm}

\section{Acknowledgments}
This work is partially supported by NSF-DMS grants 1603823 and 1604458. The first author was supported by a Fannie and John Hertz Foundation Fellowship and an NSF Graduate Research Fellowship (grant number DGE-1656466). We thank Levent Alpoge and Zachary Hilliard for helpful discussions. We also thank the anonymous referee for helpful comments. 

\vspace{-.2cm}

\end{document}